\theoremstyle{plain}
\newtheorem{theorem}{Theorem}[section]
\newtheorem{lemma}[theorem]{Lemma}
\newtheorem{proposition}[theorem]{Proposition}
\newtheorem{fact}[theorem]{Fact}
\numberwithin{equation}{section}
\theoremstyle{definition}
\newtheorem{definition}[theorem]{Definition}
\newtheorem{remark}[theorem]{Remark}
\newtheorem{notation}[theorem]{Notation}
\DeclareMathOperator{\Hom}{Hom}
\DeclareMathOperator{\End}{End}
\newcommand{\CC}{{\mathcal{C}}}
\newcommand{\F}{{\mathbb{F}}}
\newcommand{\FI}{{\mathrm{FI}}}
\newcommand{\GL}{{\mathrm{GL}}}
\newcommand{\kk}{{\Bbbk}}
\newcommand{\La}{{\mathit{\Lambda}}}
\newcommand{\la}{{\pmb{\bm\lambda}}}
\newcommand{\muu}{{\pmb{\bm\mu}}}
\newcommand{\nuu}{{\pmb{\bm\nu}}}
\newcommand{\PP}{{\mathcal{P}}}
\newcommand{\VI}{{\mathrm{VI}}}
\newcommand{\Z}{{\mathbb{Z}}}
\title{A representation stability theorem for VI-modules}
\author{Wee Liang Gan}
\address{Department of Mathematics, University of California, Riverside, CA 92521, USA. \\
Email: wlgan@math.ucr.edu}
\author{John Watterlond}
\address{Department of Mathematics, University of California, Riverside, CA 92521, USA. \\
Email: watterlond@math.ucr.edu}
\subjclass[2010]{20C33}
\keywords{representation stability; multiplicity stability; finite general linear groups; VI-modules}
\begin{document}

\begin{abstract}
Let VI be the category whose objects are the finite dimensional vector spaces over a finite field of order $q$ and whose morphisms are the injective linear maps. A VI-module over a ring is a functor from the category VI to the category of modules over the ring. A VI-module gives rise to a sequence of representations of the finite general linear groups. We prove that the sequence obtained from any finitely generated VI-module over an algebraically closed field of characteristic zero is representation stable - in particular, the multiplicities which appear in the irreducible decompositions eventually stabilize. We deduce as a consequence that the dimension of the representations in the sequence $\{V_n\}$ obtained from a finitely generated VI-module $V$ over a field of characteristic zero is eventually a polynomial in $q^n$. Our results are analogs of corresponding results on representation stability and polynomial growth of dimension for FI-modules (which give rise to sequences of representations of the symmetric groups) proved by Church, Ellenberg, and Farb.
\end{abstract}

\maketitle

\section{Introduction} \label{introduction}

The theory of representation stability was initiated by Church and Farb in their paper \cite{CF}. One of the main themes in this theory is to study, for an increasing chain of groups $G_0 \subset G_1 \subset G_2 \subset \cdots$, the asymptotic behavior of certain sequences 
\begin{equation} \label{sequence of representations}
\xymatrix{ V_0 \ar[r]^{\phi_0} & V_1 \ar[r]^{\phi_1} & V_2 \ar[r]^{\phi_2} & \cdots }
\end{equation}
where each $V_n$ is a representation of $G_n$, and each $\phi_n$ is a linear map. The sequence (\ref{sequence of representations}) is called a \emph{consistent sequence} if, for every non-negative integer $n$ and for every $g\in G_n$, the following diagram commutes:
\begin{equation*}
\xymatrix{ V_n \ar[r]^{\phi_n} \ar[d]_{g} & V_{n+1} \ar[d]^{g} \\ V_n \ar[r]_{\phi_n} & V_{n+1} }
\end{equation*}
(where $g$ acts on $V_{n+1}$ by considering it as an element of $G_{n+1}$).

For the family of symmetric groups $S_n$, it was discovered by Church, Ellenberg and Farb in \cite{CEF} that many interesting consistent sequences of representations of $S_n$ can be packaged into an $\FI$-module, where $\FI$ is the category of finite sets and injective maps. An $\FI$-module over a commutative ring $\kk$ is, by definition, a functor from $\FI$ to the category of $\kk$-modules; thus, an $\FI$-module $V$ gives rise to a consistent sequence (\ref{sequence of representations}) where $V_n=V(\{1,\ldots, n\})$ and $\phi_n$ is induced by the standard inclusion $\{1,\ldots, n\} \hookrightarrow \{1,\ldots, n+1\}$. One of the main results of \cite{CEF} is that the consistent sequence obtained from a finitely generated $\FI$-module $V$ over a field of characteristic zero is representation stable in the sense of \cite{CF}. 

Fix a finite field $\F_q$ of order $q$. The purpose of our present paper is to prove an analogous result for the family of finite general linear groups $\GL_n(\F_q)$. The role of the category $\FI$ will be played, in our paper, by the category $\VI$ whose objects are the finite dimensional vector spaces over $\F_q$ and whose morphisms are the injective linear maps. 

\begin{definition}
(i) A \emph{$\VI$-module} over a commutative ring $\kk$ is a functor from the category $\VI$ to the category of $\kk$-modules. 

(ii) A \emph{homomorphism} $F:U\to V$ of $\VI$-modules is a natural transformation from the functor $U$ to the functor $V$. 

(iii) Suppose $U$ and $V$ are $\VI$-modules such that $U(X)$ is a $\kk$-submodule of $V(X)$ for every object $X$ of $\VI$. We call $U$ a \emph{$\VI$-submodule} of $V$ if the collection of inclusion maps $U(X) \hookrightarrow V(X)$ defines a homomorphism $U\to V$ of $\VI$-modules.
\end{definition}

The category of $\VI$-modules over a commutative ring $\kk$ is an abelian category. 

\begin{notation}
Let $\Z_+$ be the set of non-negative integers. For each $n\in\Z_+$, we denote by $\mathbf{n}$ the object $\F_q^{\,n}$ of $\VI$.
\end{notation}

The full subcategory of $\VI$ generated by the objects $\mathbf{n}$ for all $n\in \Z_+$ is a skeleton of $\VI$. One has $\End_{\VI}(\mathbf{n}) = \GL_n(\F_q)$.

\begin{notation} \label{consistent sequence arising from V}
Suppose $V$ is a $\VI$-module. For each $n\in \Z_+$, set $V_n=V(\mathbf{n})$ and denote by $\phi_n : V_n \to V_{n+1}$ the map assigned by $V$ to the standard inclusion $\mathbf{n} \hookrightarrow \mathbf{n+1}$.
\end{notation}

The sequence (\ref{sequence of representations}) obtained from a $\VI$-module $V$ is a consistent sequence of representations of the groups $\GL_n(\F_q)$.

\begin{definition}
A $\VI$-module $V$ is \emph{generated by} a subset $S\subset \displaystyle\bigsqcup_{n\in\Z_+} V_n$ if the only $\VI$-submodule of $V$ containing $S$ is $V$;  we say that $V$ is \emph{finitely generated} if it is generated by a finite subset $S$.
\end{definition} 

In order to state our main results, let us briefly recall the parametrization of irreducible representations of the groups $\GL_n(\F_q)$ over an algebraically closed field of characteristic zero, a more detailed discussion of which will be given below in Section \ref{representations section}. 

Let $\CC_n$ be the set of cuspidal irreducible representations of $\GL_n(\F_q)$ (up to isomorphism), and let $\CC = \displaystyle\bigsqcup_{n\geqslant 1} \CC_n$. If $\rho\in \CC_n$, we set $\mathrm{d}(\rho)=n$. By a partition, we mean a non-increasing sequence of non-negative integers $(\lambda_1, \lambda_2, \ldots)$ where only finitely many terms are non-zero. If $\lambda=(\lambda_1, \lambda_2, \ldots)$ is a partition, we set $|\lambda|=\lambda_1 + \lambda_2 + \cdots$. Let $\PP$ be the set of partitions. For any function $\muu: \CC \to \PP$, let
\begin{equation*}
\| \muu \| = \sum_{\rho\in\CC} \mathrm{d}(\rho) |\muu(\rho)| . 
\end{equation*}
Then, following Zelevinsky \cite{Zelevinsky}, one has a natural parametrization of the isomorphism classes of irreducible representations of $\GL_n(\F_q)$ by functions $\muu: \CC \to \PP$ such that $\| \muu \| = n$; we shall denote by $\varphi(\muu)$ the irreducible representation of $\GL_n(\F_q)$ parametrized by $\muu$.

Let $\iota$ be the trivial representation of $\GL_1(\F_q)$. Then $\iota\in\CC_1$. Suppose $\la: \CC \to \PP$ is a function and $\la(\iota) = (\lambda_1,\lambda_2, \ldots)$. If $n$ is an integer $\geqslant \|\la\| + \lambda_1$, we define the function $\la[n] : \CC \to \PP$ with $\|\la[n]\|=n$ by 
\begin{equation*}
\la[n](\rho) = \left\{ \begin{array}{ll}
(n-\|\la\|, \lambda_1, \lambda_2, \ldots) & \mbox{ if } \rho=\iota, \\
\la(\rho) & \mbox{ if } \rho\neq\iota.
\end{array} \right.
\end{equation*}
Clearly, for each function $\muu: \CC\to\PP$ with $\|\muu\| < \infty$, there exists a unique function $\la: \CC\to\PP$ such that $\muu=\la[n]$, where $n=\|\muu\|$.

\begin{definition} \label{representation stable definition}
A consistent sequence (\ref{sequence of representations}) of representations of the groups $\GL_n(\F_q)$ over an algebraically closed field of characteristic zero is \emph{representation stable} if there exists an integer $N$ such that for each $n\geqslant N$, the following three conditions hold:
\begin{itemize}
\item[(RS1)]
{\bf Injectivity:} The map $\phi_n : V_n \longrightarrow V_{n+1}$ is injective.

\item[(RS2)]
{\bf Surjectivity:} The span of the $\GL_{n+1}(\F_q)$-orbit of $\phi_n(V_n)$ is all of $V_{n+1}$.

\item[(RS3)]
{\bf Multiplicities:}
There is a decomposition
\begin{equation*}
V_n  = \bigoplus_{\la} \varphi(\la[n])^{\oplus c(\la)}
\end{equation*}
where the multiplicities $0\leqslant c(\la) \leqslant \infty$ do not depend on $n$; in particular, for any $\la$ such that $\la[N]$ is not defined, one has $c(\la)=0$.
\end{itemize}
\end{definition}

The naming of the notion defined above is consistent with \cite[Definition 3.1]{Farb}; in \cite[Definition 3.3.2]{CEF} and \cite[Definition 2.6]{CF}, this is called \emph{uniformly} representation stable. Our main result reads:

\begin{theorem} \label{first main theorem}
Let $V$ be a $\VI$-module over an algebraically closed field of characteristic zero. Then $V$ is finitely generated if and only if the consistent sequence (\ref{sequence of representations}) obtained from $V$ is representation stable and $\dim(V_n) < \infty$ for each $n$.
\end{theorem}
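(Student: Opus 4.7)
The plan is to prove the ``if'' and ``only if'' directions separately. For the ``if'' direction, assume representation stability with stable threshold $N$ and $\dim V_n < \infty$ for all $n$. Any injective linear map $\mathbf{n} \to \mathbf{n+1}$ factors as $g \circ \iota_n$ with $g \in \GL_{n+1}(\F_q)$ and $\iota_n$ the standard inclusion (extend the image to a basis of $\mathbf{n+1}$ and define $g$ accordingly). Condition (RS2), applied iteratively for $n \geqslant N$, then implies that the $\VI$-submodule of $V$ generated by the finite-dimensional space $V_0 \oplus V_1 \oplus \cdots \oplus V_N$ agrees with $V$ in every degree. Hence $V$ is finitely generated.

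For the ``only if'' direction, finite-dimensionality of each $V_n$ is immediate: $V_n$ is spanned by $V(f)(v_i)$ where $\{v_i\}$ is a finite generating set with $v_i \in V_{d_i}$ and $f$ ranges over the finite set $\Hom_{\VI}(\mathbf{d_i},\mathbf{n})$. The main task is establishing (RS1)--(RS3), which I would approach through the ``induced'' $\VI$-modules
\[
M(W)(\mathbf{n}) \;=\; \kk\Hom_{\VI}(\mathbf{m}, \mathbf{n}) \otimes_{\kk\GL_m(\F_q)} W
\]
for $W$ a representation of $\GL_m(\F_q)$. As a $\GL_n(\F_q)$-module, $M(W)(\mathbf{n})$ is the Harish-Chandra induction of $W \boxtimes \mathbf{1}$ from the standard Levi $\GL_m(\F_q) \times \GL_{n-m}(\F_q)$. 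When $W = \varphi(\nuu)$ is irreducible, Zelevinsky's identification \cite{Zelevinsky} of $\bigoplus_n R(\GL_n(\F_q))$ with the polynomial Hopf algebra $\bigotimes_{\rho \in \CC}\La[\rho]$ (with parabolic induction realized as multiplication) reduces this induction to multiplying the symbol of $\varphi(\nuu)$ by the complete homogeneous element $h_{n-m}$ in the $\iota$-variables. Pieri's rule then yields, for $n$ sufficiently large, an explicit decomposition
\[
M(\varphi(\nuu))(\mathbf{n}) \;=\; \bigoplus_{\la'} \varphi(\la'[n])
\]
where $\la'$ ranges over the finite set of functions $\CC \to \PP$ with $\la'(\rho) = \nuu(\rho)$ for $\rho \neq \iota$ and $\la'(\iota)$ interlacing with $\nuu(\iota)$. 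This verifies (RS1)--(RS3) for every $M(\varphi(\nuu))$, with (RS1) following from the injectivity of post-composition by $\iota_n$ on $\Hom$-sets.

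Next, any finitely generated $V$ admits a surjection $\pi : U = \bigoplus_{i=1}^k M(W_i) \twoheadrightarrow V$ with each $W_i$ an irreducible $\GL_{m_i}(\F_q)$-representation (in characteristic zero, each $V_{d_i}$ is a sum of irreducibles). The decomposition of $U$ then bounds the multiplicity of each $\varphi(\la[n])$ in $V_n$ by that in $U_n$, which is eventually constant, and restricts the support to a finite set of $\la$'s. Condition (RS2) for $V$ follows from (RS2) for $U$ via surjectivity of $\pi$. To upgrade ``bounded'' to ``eventually constant'' multiplicities in $V_n$, and to obtain (RS1), one studies the kernel $K = \ker \pi$: if $K$ is itself finitely generated, the analysis above applies to $K$ as well, and Maschke's theorem (the order of $\GL_n(\F_q)$ is invertible in $\kk$) together with exactness of $0 \to K_n \to U_n \to V_n \to 0$ yields stable multiplicities, and thus also injectivity, for $V_n$.

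The main obstacle is thus a Noetherianity statement: a $\VI$-submodule of a finitely generated $\VI$-module over an algebraically closed field of characteristic zero is itself finitely generated. A natural approach parallels the strategy for $\FI$-modules in \cite{CEF}: introduce a shift functor $(\Sigma V)(\mathbf{n}) = V(\mathbf{n+1})$ together with an associated ``derivative'' and run an induction on a complexity invariant attached to the generators, reducing at each step the degrees in which new generators must appear. The genuinely new technical content, relative to the $\FI$ case, lies in the richer orbit structure of $\GL_{n+1}(\F_q)$ acting on $\Hom_{\VI}(\mathbf{n},\mathbf{n+1})$ and the need to interface the induction with the Pieri-branching combinatorics of the $\varphi(\la[n])$.
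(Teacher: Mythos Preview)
Your analysis of the building blocks $M(W)$ is correct, and from the surjection $U=\bigoplus M(W_i)\twoheadrightarrow V$ you legitimately obtain (RS2), finite-dimensionality, and the weight bound (the finite support in $\la$). The gap is in the step where you pass from ``bounded'' to ``eventually constant'' multiplicities. Your plan is to apply the same analysis to $K=\ker\pi$; but ``the same analysis'' for $K$ only yields a surjection $U'\twoheadrightarrow K$ and bounded multiplicities for $K$, and to get (RS3) for $K$ you must now study $\ker(U'\to K)$, and so on. Noetherianity tells you each successive kernel is finitely generated, but it does not supply a complexity invariant that drops along this chain of kernels, so the recursion does not terminate. (Your final paragraph invokes a shift/derivative induction, but only to prove Noetherianity itself, not to break this loop.) The claim that (RS1) follows ``from stable multiplicities'' is likewise unjustified: from injectivity for $U$ and $K$ the snake lemma gives only $\ker\phi_n^V\hookrightarrow\operatorname{coker}\phi_n^K$, which is typically nonzero.

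The paper avoids the kernel entirely. Its key device is \emph{weak stability}: for each fixed $m$ the maps on coinvariants $\phi_{m,r}\colon (V_{m+r})_{H_{m,r}}\to(V_{m+r+1})_{H_{m,r+1}}$ are eventually isomorphisms. This property passes directly from $U$ to its quotient $V$ with no reference to $K$, because coinvariants are right exact (so the vertical maps in the obvious square are surjective) and a sequence of surjections between finite-dimensional spaces whose source dimensions are eventually constant must eventually be bijections. With weak stability and the weight bound in hand, (RS3) is then proved by induction on $\|\la\|$: one reads off the multiplicities $c(\la,n)$ with $\|\la\|=m$ from the $G_m$-module $(V_n)^{H_{m,n-m}}\cong(V_n)_{H_{m,n-m}}$ via the Pieri description (your Lemma on $\varphi(\muu)^{H_{m,r}}$), after the contributions from smaller $\|\la\|$ have already stabilized by induction. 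The same weak-stability hypothesis also yields (RS1) directly, so Noetherianity is in fact not needed for either (RS1) or (RS3).
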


After some preparation in Section \ref{stability degree and weight section}, we give the proof of Theorem \ref{first main theorem} in Section \ref{multiplicity stability section}. In Section \ref{polynomial growth of dimension section}, we deduce from property (RS3) (which holds by Theorem \ref{first main theorem}) the following result.

\begin{theorem} \label{second main theorem}
Let $V$ be a finitely generated $\VI$-module over a field of characteristic zero. Then there exists a polynomial $P\in \mathbb{Q}[T]$ and an integer $N$ such that 
\begin{equation*}
\dim (V_n) = P(q^n) \quad \mbox{ for all } n \geqslant N.
\end{equation*}
\end{theorem}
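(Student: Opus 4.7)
The plan is to reduce to the case of an algebraically closed field, invoke Theorem \ref{first main theorem} to decompose $V_n$ into a finite direct sum of irreducibles $\varphi(\la[n])$ with multiplicities $c(\la)$ independent of $n$, and then check that the dimension of each $\varphi(\la[n])$ is eventually a polynomial in $q^n$ with rational coefficients.

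The reduction is immediate: $\dim_\kk V_n = \dim_{\overline{\kk}}(V \otimes_\kk \overline{\kk})_n$, and $V \otimes_\kk \overline{\kk}$ is still a finitely generated $\VI$-module, so we may assume $\kk$ is algebraically closed. Theorem \ref{first main theorem} then supplies an integer $N$ and multiplicities $c(\la)$ with $V_n \cong \bigoplus_\la \varphi(\la[n])^{\oplus c(\la)}$ for all $n \geqslant N$. Property (RS3) forces $c(\la) = 0$ whenever $\la[N]$ is not defined, leaving only finitely many $\la$ with $c(\la) > 0$; since each $V_n$ is finite-dimensional, every such $c(\la)$ is moreover finite. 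Therefore
\begin{equation*}
\dim V_n = \sum_\la c(\la)\, \dim \varphi(\la[n]),
\end{equation*}
with the sum finite, and the theorem reduces to showing that, for each fixed $\la$, $\dim \varphi(\la[n])$ is eventually a polynomial in $q^n$ with rational coefficients.

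The key input is the $q$-analog of the hook-length formula for the dimension of $\varphi(\muu)$, which, with $\psi_n(q) = \prod_{i=1}^n(q^i - 1)$, $n(\lambda) = \sum_i (i-1)\lambda_i$, and $H_\lambda(t) = \prod_{(i,j)\in\lambda}(t^{h(i,j)} - 1)$, reads
\begin{equation*}
\dim \varphi(\muu) = \psi_n(q) \prod_{\rho \in \CC} \frac{q^{\mathrm{d}(\rho)\, n(\muu(\rho))}}{H_{\muu(\rho)}(q^{\mathrm{d}(\rho)})}.
\end{equation*}
For $\muu = \la[n]$, the factors at $\rho \neq \iota$ and the exponent $n(\la[n](\iota)) = n(\la(\iota))$ are all constants in $n$. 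Writing $k = \|\la\|$ and $\la(\iota) = (\lambda_1, \lambda_2, \ldots)$, the hook lengths of the partition $(n - k, \lambda_1, \lambda_2, \ldots)$ split into three groups: the hooks in columns $j > \lambda_1$ of row $1$, whose product is $\psi_{n - k - \lambda_1}(q)$; the hooks in the first $\lambda_1$ columns of row $1$, each of the form $q^{n - a_j} - 1$ for certain $a_j \in \mathbb{Z}_{\geqslant 0}$ depending only on $\la$; and the hooks in rows $\geqslant 2$, which are absolute constants. After cancellation against $\psi_n(q)/\psi_{n - k - \lambda_1}(q) = \prod_{s=0}^{k + \lambda_1 - 1}(q^{n-s} - 1)$ one finds
\begin{equation*}
\dim \varphi(\la[n]) = C_\la \prod_{s \in S_\la}(q^n - q^s)
\end{equation*}
for some $C_\la \in \mathbb{Q}$ and some size-$k$ subset $S_\la \subset \{0, 1, \ldots, k + \lambda_1 - 1\}$, which is a polynomial in $q^n$ of degree $\|\la\|$ with rational coefficients.

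Summing over $\la$ then gives $\dim V_n = P(q^n)$ for $n \geqslant N$ with $P \in \mathbb{Q}[T]$, as desired. The step demanding real care is the hook-length bookkeeping above: one must verify that the $\lambda_1$ integers $a_j$ coming from the first row of $\la[n](\iota)$ are distinct elements of $\{0, 1, \ldots, k + \lambda_1 - 1\}$, so that cancellation against the $k + \lambda_1$ factors in $\psi_n(q)/\psi_{n - k - \lambda_1}(q)$ leaves exactly $k$ surviving factors of the form $q^n - q^s$. This is routine but is the main place where the combinatorics of the padding operation $\la \mapsto \la[n]$ has to be controlled.
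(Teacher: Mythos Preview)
Your proposal is correct and follows essentially the same route as the paper: reduce to an algebraically closed field, apply Theorem~\ref{first main theorem} to obtain a finite decomposition $V_n \cong \bigoplus_{\la} \varphi(\la[n])^{\oplus c(\la)}$ with stable multiplicities, and then use the hook-length formula to show each $\dim\varphi(\la[n])$ is a polynomial in $q^n$ of degree $\|\la\|$ with rational coefficients. Your bookkeeping with the first-row hooks, the cancellation against $\psi_n(q)/\psi_{n-k-\lambda_1}(q)$, and the resulting $k$ surviving linear factors is exactly the computation the paper carries out (with $\Phi_n$ in place of your $\psi_n$ and $(q^{n-s}-1)$ in place of your $(q^n-q^s)$, which differ only by a rational constant absorbed into $C_{\la}$).
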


The various steps involved in the proof of Theorem \ref{first main theorem} is summarized in the following diagram.

\begin{equation} \label{implications}
\xymatrix{
& \mbox{$V$ is noetherian} \ar@{=>}[r]^-{\mbox{\sf(ii)}} 
& \mbox{(RS1)}  \\ 
\mbox{$V$ is finitely generated} \ar@{=>}[ru]^-{\mbox{\sf(i)}} \ar@{<=>}[rr]^-{\mbox{\sf(iii)}} 
\ar@{=>}[rd]_-{\mbox{\sf(iv)}}
& & \parbox{3cm}{\centering (RS2) and $\dim(V_n)<\infty$ for each $n$ } \\
& \parbox{3cm}{\centering $V$ is weakly stable and weight bounded} \ar@{=>}[r]^-{\mbox{\sf(v)}}
& \mbox{(RS3)}}
\end{equation}

Implication {\sf(i)} in (\ref{implications}) was proved by the first author and Li in \cite[Theorem 3.7 and Example 3.10]{GL-Noetherian}. (More generally, Putman and Sam \cite[Theorem A]{PS}, and Sam and Snowden \cite[Corollary 8.3.3]{SS-Grobner}, proved implication {\sf(i)} over any noetherian ring.)
Implications {\sf(ii)} and {\sf(iii)} are straightforward; see \cite[Proposition 5.1 and Proposition 5.2]{GL-Noetherian} for their proofs.
Implications {\sf(iv)} and {\sf(v)} will be proved in the present paper by adapting the proofs for $\FI$-modules due to Church, Ellenberg and Farb in \cite{CEF}. A difference between our proof and theirs is that we do not try to minimize the $N$ in Definition \ref{representation stable definition}; we hope this streamlines the proof and makes it easier for the reader to grasp the essence of their argument, which is really short and nice. 

We deduce Theorem \ref{second main theorem} from Theorem \ref{first main theorem} using the hook-length formula for dimensions of irreducible representations of $\GL_n(\F_q)$. The corresponding result for $\FI$-modules was proved in \cite[Theorem 1.5]{CEF} via character polynomials. (More generally, it was proved for $\FI$-modules over a field of any characteristic in \cite[Theorem B]{CEFN}.)

Let us also mention that if $V$ is a finitely presented $\VI$-module over a commutative ring, then it is known (independently by \cite{GL-Central} and \cite{PS}) that the representations $V_n$ have an inductive description called central stability; this inductive description does not say anything about the irreducible decomposition of $V_n$ as a representation of $\GL_n(\F_q)$. In fact, the notion of central stability can be formulated in a very general setting. On the other hand, the notion of representation stability (more precisely property (RS3)) depends crucially on the sequence of groups involved. Theorems \ref{first main theorem} and \ref{second main theorem} do not follow from the results of \cite{GL-Central} or \cite{PS}.

\section{Representations of finite general linear groups} \label{representations section}

Irreducible representations of $\GL_n(\F_q)$ over an algebraically closed field of characteristic zero were first classified by Green \cite{Green}. We collect in this section the basic facts we need, following \cite{SZ} and \cite{Zelevinsky}.

\subsection{Notations}

From now on, we let $\kk$ be an algebraically closed field of characteristic zero; by a representation or a 
$\VI$-module, we mean a representation or a $\VI$-module over $\kk$.

For any finite group $G$, we write $R(G)$ for the Grothendieck group of the category of finite dimensional representations of $G$. If $\pi$ is a representation of $G$, we write $\pi^G$ for the subspace of $G$-invariants of $\pi$, and $\pi_G$ for the quotient space of $G$-coinvariants of $\pi$. 

For each $n\in \Z_+$, we set $G_n = \GL_n(\F_q)$. Let
\begin{equation*}
R = \bigoplus_{n\in \Z_+} R(G_n).
\end{equation*}

If $m,r\in \Z_+$ and $n=m+r$, let $P_{m,r}\subset G_n$ be the subgroup of matrices of the form
\begin{equation} \label{matrix p}
p = \left( \begin{array}{cc} g_{11} & g_{12} \\ 0 & g_{22} \end{array}\right),
\quad \mbox{ where } g_{11}\in G_m,\; g_{22}\in G_r. 
\end{equation}
Define the subgroups $G_{m,r}$, $H_{m,r}$, $U_{m,r}$ of $P_{m,r}$ by the conditions that, for any element $p$ of the form (\ref{matrix p}), one has:
\begin{eqnarray*}
p\in G_{m,r} & \Longleftrightarrow & g_{12} = 0, \\
p\in H_{m,r} & \Longleftrightarrow & g_{11} = 1_m, \\
p\in U_{m,r} & \Longleftrightarrow & g_{11} = 1_m \mbox{ and } g_{22} = 1_r,
\end{eqnarray*}
where, for any $m\in\Z_+$, we write $1_m$ for the identity element of $G_m$.

The composition $G_{m,r} \hookrightarrow P_{m,r} \to P_{m,r}/U_{m,r}$ is an isomorphism. If $\pi_1$ is a representation of $G_m$ and $\pi_2$ is a representation of $G_r$, we denote by $\pi_1\times \pi_2$ the representation of $G_{m+r}$ obtained from the external tensor product $\pi_1\boxtimes\pi_2$ by parabolic induction via $P_{m,r}$, that is, we regard the representation $\pi_1\boxtimes\pi_2$ of $G_{m,r}$ as a representation of $P_{m,r}/U_{m,r}$ via the isomorphism $G_{m,r} \cong P_{m,r}/U_{m,r}$, then pull it back to a representation of $P_{m,r}$ in which $U_{m,r}$ acts trivially, and let $\pi_1\times \pi_2$ be the induced representation of $\pi_1\boxtimes\pi_2$ from $P_{m,r}$ to $G_{m+r}$. This defines a multiplication on $R$. It is a well-known result of Green \cite[Lemma 2.5]{Green} that $R$ is a commutative graded ring.

\subsection{Decomposition into a tensor product}

By definition, an irreducible representation $\rho$ of $G_n$ is \emph{cuspidal} if 
\begin{equation*}
 \rho^{U_{m,n-m}} = 0 \quad \mbox{ for } m=1,\ldots n-1.
\end{equation*}
Recall that we denote by $\CC_n$ the set of cuspidal irreducible representations of $G_n$ (up to isomorphism), and write $\CC$ for $\displaystyle\bigsqcup_{n\geqslant 1} \CC_n$. For each $\rho \in \CC$, let $R(\rho)$ be the additive subgroup of $R$ generated by all $\pi \in R$ such that $\pi$ is a subrepresentation of $\rho^{\times r}$ for some $r\in\Z_+$.

\begin{fact}[{\cite[\S9]{Zelevinsky}}] \label{decomposition fact}
For each $\rho\in\CC$, the additive subgroup $R(\rho)$ of $R$ is a subring of $R$. Moreover, the multiplication map
\begin{equation} \label{tensor product decomposition}
\bigotimes_{\rho\in\CC} R(\rho) \longrightarrow R
\end{equation}
is a ring isomorphism.
\end{fact}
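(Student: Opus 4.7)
The plan is to place $R$ in Zelevinsky's framework of positive self-adjoint Hopf algebras (PSH-algebras); the decomposition (\ref{tensor product decomposition}) then falls out of the general structure theorem for PSH-algebras, which states that any PSH-algebra is a tensor product of sub-PSH-algebras, one for each irreducible primitive basis element.

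First, I would equip $R$ with a comultiplication coming from the Jacquet functor. For a representation $\pi$ of $G_n$, set $r_{m,n-m}(\pi) = \pi^{U_{m,n-m}}$, viewed as a representation of $G_m \times G_{n-m}$ via the isomorphism $G_{m,n-m} \cong P_{m,n-m}/U_{m,n-m}$, and define $\Delta(\pi) = \sum_{m=0}^{n} [r_{m,n-m}(\pi)] \in R \otimes R$. Frobenius reciprocity for $P_{m,n-m} \subset G_n$ identifies $\Delta$ as the adjoint of multiplication under the natural inner product making irreducibles orthonormal (self-adjointness), while the Mackey double-coset formula applied to pairs of parabolic subgroups $P_{m,n-m}$, $P_{a,b}$ in $G_n$ yields the Hopf axiom that $\Delta$ is a ring homomorphism. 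Together with the canonical $\Z_{\geqslant 0}$-basis of irreducibles (positivity), this makes $R$ a PSH-algebra. The cuspidality condition $\rho^{U_{m,n-m}} = 0$ for $0 < m < n$ is, by definition, primitivity of $\rho$ under $\Delta$, so $\CC$ is exactly the set of irreducible primitive elements of $R$.

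Second, I would apply Zelevinsky's structure theorem. The sub-PSH-algebra generated by a primitive irreducible $\rho \in \CC$ coincides with $R(\rho)$ as defined in the text: it is closed under multiplication since $\rho^{\times r} \cdot \rho^{\times s} = \rho^{\times(r+s)}$, and closed under $\Delta$ by applying the Hopf axiom to iterated products of $\rho$ and using primitivity. The structure theorem then asserts that the multiplication map $\bigotimes_{\rho \in \CC} R(\rho) \to R$ is an isomorphism of PSH-algebras; commutativity of $R$ (Green's theorem, quoted in the text) ensures that this map from the tensor product is a ring homomorphism in the first place.

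The main obstacle is the structure theorem itself, whose heart is an orthogonality argument exploiting self-adjointness. If $\pi$ is an irreducible constituent of a product $\pi_1 \cdots \pi_k$ with $\pi_i \in R(\rho_i)$ for distinct cuspidals $\rho_1,\ldots,\rho_k$, and $\pi'$ arises similarly from a different multiset of cuspidal types, then $\langle \pi, \pi' \rangle$ can be rewritten, via the adjointness between multiplication and $\Delta$, as an inner product of tensor products of cuspidals; these vanish unless the cuspidal types match. This forces injectivity of the multiplication map, while surjectivity follows from the existence of a cuspidal support for every irreducible, obtained by iterating $\Delta$ until one reaches primitives.
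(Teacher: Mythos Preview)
The paper does not give a proof of this statement; it is recorded as a \emph{Fact} with a citation to \cite[\S9]{Zelevinsky} and is used as a black box. Your proposal is a faithful outline of Zelevinsky's own argument in that reference: equip $R$ with the comultiplication coming from the Jacquet functor, verify the PSH-algebra axioms (positivity, self-adjointness via Frobenius reciprocity, the Hopf axiom via the Mackey formula for parabolic induction), identify the cuspidals with the primitive irreducibles, and invoke the structure theorem for PSH-algebras. So your approach is not merely consistent with the paper's---it \emph{is} the proof behind the citation, and in that sense you have supplied more than the paper itself does.

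One small remark on your sketch: when you say $R(\rho)$ coincides with the sub-PSH-algebra generated by $\rho$, the containment of $R(\rho)$ in that subalgebra is immediate, but the reverse containment (that every irreducible constituent arising from iterated products and coproducts of $\rho$ already sits inside some $\rho^{\times r}$) uses the self-adjointness and the Hopf axiom in a slightly more substantive way than your parenthetical suggests. This is handled in Zelevinsky's general theory, so it is not a gap, but it is worth being aware that this identification is part of the content of the structure theorem rather than a triviality preceding it.
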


The tensor product in (\ref{tensor product decomposition}) is defined as the inductive limit
\begin{equation*}
\varinjlim_{\mathcal{S}} \, \bigotimes_{\rho\in \mathcal{S}} R(\rho)
\end{equation*}
where $\mathcal{S}$ runs over the finite subsets of $\CC$ partially ordered by inclusion.

\subsection{Ring of symmetric functions} 

Let 
\begin{equation*}
\La = \bigoplus_{r\in\Z_+} \La_r
\end{equation*}
be the graded ring of symmetric functions in an infinite countable set of variables with coefficients in $\Z$ (see \cite[Chapter 1]{Mac} or \cite[\S5]{Zelevinsky}). For each partition $\lambda\in\PP$, we write $s_\lambda$ for the Schur function corresponding to $\lambda$. It is well-known that, for each $r\in\Z_+$, the Schur functions $s_\lambda$ with $|\lambda|=r$ form a $\Z$-basis for $\La_r$. (Recall that for a partition $\lambda=(\lambda_1, \lambda_2, \ldots)$, we write $|\lambda|$ for $\lambda_1+\lambda_2+\cdots$.)

\begin{fact}[{\cite[\S9]{Zelevinsky}}] \label{schur functions fact}
For each $\rho\in\CC$, there is a natural isomorphism of rings 
\begin{equation*}
\La \stackrel{\sim}{\longrightarrow} R(\rho),
\end{equation*} 
denoted by 
\begin{equation*}
f \mapsto f(\rho), 
\end{equation*}
such that if $r\in\Z_+$ and $n=r\cdot\mathrm{d}(\rho)$, then the elements $s_\lambda(\rho)$ with $|\lambda|=r$ are irreducible representations of $G_n$. (Recall that $\mathrm{d}(\rho)=m$ if $\rho\in \CC_m$.)
\end{fact}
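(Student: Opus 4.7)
The plan is to follow Zelevinsky's approach via the theory of positive self-adjoint Hopf algebras (PSH-algebras), enriching $R$ with additional structure beyond the parabolic induction product already defined.

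First I would equip $R$ with a comultiplication given by parabolic restriction: for $\pi$ a representation of $G_n$, set
\begin{equation*}
\Delta(\pi) = \sum_{m+k=n} \pi^{U_{m,k}},
\end{equation*}
regarded as an element of $\bigoplus_{m+k=n} R(G_m) \otimes R(G_k)$ via the isomorphism $G_{m,k} \cong P_{m,k}/U_{m,k}$. The Mackey formula shows that this is a coalgebra structure compatible with the product, and Frobenius reciprocity gives self-adjointness with respect to the inner product on $R$ for which the irreducible representations of all $G_n$ form an orthonormal basis $\Omega$. Since both parabolic induction and parabolic restriction send non-negative combinations of irreducibles to non-negative combinations of irreducibles, $(R,\Omega)$ is a PSH-algebra in the sense of Zelevinsky.

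Next I would verify that $R(\rho)$ is itself a sub-PSH-algebra. Closure under multiplication is Fact \ref{decomposition fact}. For closure under comultiplication, the key computation is that the parabolic restriction of $\rho^{\times r}$ decomposes, by the Mackey formula together with the cuspidality of $\rho$ (which forces $\rho^{U_{m,k}}=0$ for $0 < m < \mathrm{d}(\rho)$), into a sum of tensor products of the form $\rho^{\times i} \otimes \rho^{\times (r-i)}$. Taking subrepresentations, it follows that $\Delta$ carries $R(\rho)$ into $R(\rho) \otimes R(\rho)$. The same cuspidality argument shows that $\rho$ itself is primitive ($\Delta(\rho) = \rho \otimes 1 + 1 \otimes \rho$), and a short inner-product computation shows that no other element of $\Omega \cap R(\rho)$ has nonzero primitive component, so up to scalar $\rho$ is the unique primitive element of $R(\rho)$.

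Finally, I would invoke Zelevinsky's classification: a PSH-algebra with a unique primitive element in its positive basis is isomorphic, as a PSH-algebra, to $\Lambda$ equipped with its Schur basis $\{s_\lambda\}$, and the isomorphism is uniquely determined once the primitive is sent to the power-sum $p_1 = s_{(1)}$. Applying this to $R(\rho)$ yields the natural ring isomorphism $\Lambda \xrightarrow{\sim} R(\rho)$, $f \mapsto f(\rho)$, with $p_1 \mapsto \rho$. Because the isomorphism carries the positive basis to the positive basis and respects the grading (where $R(\rho)$ is graded by $n/\mathrm{d}(\rho)$), the Schur functions $s_\lambda$ with $|\lambda| = r$ go to irreducible representations of $G_{r\cdot\mathrm{d}(\rho)}$ as claimed. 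The main obstacle in this plan is Zelevinsky's structure theorem for PSH-algebras with a single primitive; its proof is an induction on degree that reconstructs the comultiplication from the primitive using self-adjointness and positivity, and it is where the bulk of the technical work lies.
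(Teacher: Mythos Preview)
The paper does not give its own proof of this statement; it is recorded as a Fact with a citation to \cite[\S9]{Zelevinsky}. Your outline is a faithful sketch of Zelevinsky's argument via positive self-adjoint Hopf algebras, so you are reproducing precisely the proof the paper defers to.

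One small correction: the PSH-isomorphism $\Lambda \to R(\rho)$ is not uniquely determined by the condition that the primitive $\rho$ go to $s_{(1)}=p_1$. Zelevinsky's structure theorem in fact produces exactly \emph{two} PSH-isomorphisms, related by the involution $s_\lambda \leftrightarrow s_{\lambda'}$ (transpose of partitions) on $\Lambda$; both send $s_{(1)}$ to $\rho$, and one must make a further normalization in degree~$2$---distinguishing which irreducible constituent of $\rho\times\rho$ corresponds to $s_{(2)}$ versus $s_{(1,1)}$---to single one out. This does not affect the statement you are proving, which only asserts existence of a natural isomorphism carrying Schur functions to irreducibles, but your remark that the map is ``uniquely determined once the primitive is sent to $p_1$'' is not quite right.
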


For each $n\in \Z_+$, we write $h_n$ for the $n$-th complete symmetric function, and $\iota_n$ for the trivial representation of $G_n$. Recall that $\iota=\iota_1$ and $\iota\in\CC_1$.

\begin{fact}[{\cite[\S9]{Zelevinsky}}] \label{trivial rep fact}
For each $n\in \Z_+$, one has $h_n(\iota)=\iota_n$.
\end{fact}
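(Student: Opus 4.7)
The plan is induction on $n$, combining Pieri's rule in $\La$ with Frobenius reciprocity on $G_n$.

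For the base case $n=1$, note $h_1 = s_{(1)}$. The degree-$1$ component of $R(\iota)$ is generated by subrepresentations of $\iota^{\times 1} = \iota$ in $R(G_1)$, hence equals $\Z\cdot\iota$. By Fact~\ref{schur functions fact}, $h_1(\iota) = s_{(1)}(\iota)$ is an irreducible representation lying in this component, so it must equal $\iota = \iota_1$.

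For the inductive step, assume $h_k(\iota) = \iota_k$ for all $k < n$. Pieri's rule in $\La$ gives $h_1\cdot h_{n-1} = h_n + s_{(n-1,1)}$, so applying the ring isomorphism of Fact~\ref{schur functions fact} together with the inductive hypothesis yields
\begin{equation*}
\iota \times \iota_{n-1} \;=\; h_n(\iota) + s_{(n-1,1)}(\iota),
\end{equation*}
a sum of two distinct irreducibles in $R(G_n)$. On the other hand, by construction $\iota\times\iota_{n-1} = \mathrm{Ind}_{P_{1,n-1}}^{G_n}(\text{triv})$ is the permutation representation of $G_n$ on the projective space $\mathbb{P}^{n-1}(\F_q) = G_n/P_{1,n-1}$. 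Frobenius reciprocity gives
\begin{equation*}
\dim\Hom_{G_n}\bigl(\iota_n,\,\iota\times\iota_{n-1}\bigr) \;=\; \dim\Hom_{P_{1,n-1}}(\text{triv},\text{triv}) \;=\; 1,
\end{equation*}
so $\iota_n$ occurs with multiplicity exactly one on the left, and must therefore be one of the two summands on the right.

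The remaining — and, I expect, the genuinely delicate — step is to single out $\iota_n = h_n(\iota)$ rather than $\iota_n = s_{(n-1,1)}(\iota)$. My preferred route is through the Hopf-algebra structure on $R$ induced by the Jacquet functors $\pi \mapsto \pi_{U_{m,n-m}}$: for the trivial representation one has $(\iota_n)_{U_{m,n-m}} = \iota_m\boxtimes\iota_{n-m}$, so the comultiplication of $\iota_n$ is $\sum_m \iota_m\otimes\iota_{n-m}$, which matches $\Delta(h_n) = \sum_m h_m\otimes h_{n-m}$ in $\La$ under the inductive hypothesis but fails to match $\Delta(s_{(n-1,1)})$ (which produces Schur-function tensor factors other than complete ones). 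This forces $h_n(\iota) = \iota_n$. A self-contained alternative is a dimension count: $\dim\iota_n = 1$, while the hook-length formula for $\GL_n(\F_q)$ (the same tool used for Theorem~\ref{second main theorem}) shows $\dim s_{(n-1,1)}(\iota) > 1$ for $n\geqslant 2$, ruling out the other possibility.
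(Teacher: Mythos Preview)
The paper gives no proof of this statement: it is stated as a Fact with citation to \cite[\S9]{Zelevinsky} and nothing more. So there is no in-paper argument to compare against.

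Your inductive argument is correct. Regarding the step you flag as delicate: your comultiplication route is essentially how Zelevinsky arrives at the result. In his treatment both $\La$ and $R$ carry positive self-adjoint Hopf (PSH) algebra structures, the map of Fact~\ref{schur functions fact} is a PSH isomorphism (not merely a ring isomorphism), and the identity $h_n(\iota)=\iota_n$ is precisely the normalization that singles out this isomorphism from its twist by the involution $\omega$ on $\La$ (the other choice would send $e_n\mapsto\iota_n$). So your first approach is less a workaround than a rediscovery of the source argument. Note, however, that the paper only records the ring-isomorphism part in Fact~\ref{schur functions fact}, so strictly within the paper's stated inputs you are implicitly borrowing the Hopf compatibility from \cite{Zelevinsky}.

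Your dimension alternative via the hook-length formula (Fact~\ref{hook length formula}) also works and is not circular, since that formula does not depend on Fact~\ref{trivial rep fact}. One finds $\dim s_{(n-1,1)}(\iota) = q(q^{n-1}-1)/(q-1) \geqslant q \geqslant 2$ for $n\geqslant 2$, forcing $\iota_n = h_n(\iota)$.
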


\subsection{Classification of irreducible representations}

Recall that for any function $\muu: \CC \to \PP$, we let $\| \muu \| = \sum_{\rho\in\CC} \mathrm{d}(\rho) |\muu(\rho)|$.

From Fact \ref{decomposition fact} and Fact \ref{schur functions fact}, one can deduce the following parametrization of the irreducible representations of the groups $G_n$.

\begin{fact}[{\cite[\S9]{Zelevinsky}}] \label{classification fact}
For each $n\in \Z_+$, the irreducible representations of the group $G_n$ are parametrized by the functions $\muu : \CC \to \PP$ such that $\| \muu \| = n$. Under this parametrization, the irreducible representation $\varphi(\muu)$ corresponding to $\muu$ is 
\begin{equation*}
\varphi(\muu) = \prod_{\rho\in\CC} s_{\muu(\rho)}(\rho).
\end{equation*}
\end{fact}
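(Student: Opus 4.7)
The plan is to deduce Fact \ref{classification fact} directly from Facts \ref{decomposition fact} and \ref{schur functions fact}: together they produce a natural $\Z$-basis of $R$ indexed by functions $\muu : \CC \to \PP$ of finite support, and a short inner-product argument then identifies this basis with the irreducibles.

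Concretely, by Fact \ref{schur functions fact} each $R(\rho)$ carries the $\Z$-basis $\{s_\lambda(\rho) : \lambda \in \PP\}$, with $s_\lambda(\rho)$ living in degree $|\lambda|\cdot \mathrm{d}(\rho)$. The tensor product $\bigotimes_{\rho \in \CC} R(\rho)$ therefore has a $\Z$-basis of pure tensors $\bigotimes_{\rho} s_{\muu(\rho)}(\rho)$, one for each function $\muu : \CC \to \PP$ of finite support. Applying the ring isomorphism of Fact \ref{decomposition fact} transports this to the $\Z$-basis $\{\varphi(\muu) = \prod_\rho s_{\muu(\rho)}(\rho)\}$ of $R$, and comparing degrees shows $\varphi(\muu) \in R(G_n)$ precisely when $\|\muu\| = n$. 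Since the $\Z$-rank of $R(G_n)$ is the number of irreducibles of $G_n$, matching cardinalities reduces the problem to verifying that each $\varphi(\muu)$ is an honest irreducible representation rather than just a virtual one.

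For that last step I would use the standard inner product on $R(G_n)$ under which the irreducibles form an orthonormal basis, and check that under the isomorphisms of Facts \ref{decomposition fact} and \ref{schur functions fact} it pulls back to the tensor product of Hall inner products on the copies of $\La$ (in which the Schur functions are orthonormal). Granting this, $\langle \varphi(\muu), \varphi(\muu) \rangle = \prod_\rho \langle s_{\muu(\rho)}, s_{\muu(\rho)} \rangle_\La = 1$; and because $\varphi(\muu)$ is a parabolic induction of honest representations, it lies in the positive cone and expands as a non-negative integer combination of irreducibles. A non-negative integer combination of orthonormal vectors with squared norm $1$ must be a single irreducible, so each $\varphi(\muu)$ is as claimed. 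The main obstacle is precisely this compatibility of inner products; it is the structural content of Zelevinsky's theory of positive self-adjoint Hopf algebras and is proved by a Mackey-style analysis of parabolic induction and restriction. Once granted, the rest is formal bookkeeping.
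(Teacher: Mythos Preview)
The paper does not supply its own proof of this fact: it is stated with a citation to Zelevinsky and prefaced only by the remark that it can be deduced from Fact~\ref{decomposition fact} and Fact~\ref{schur functions fact}. Your proposal carries out exactly that deduction, and the argument you sketch---transporting the Schur-function bases through the tensor decomposition, then using the compatibility of inner products (the PSH structure) together with positivity of parabolic induction to pin down irreducibility---is the standard one and is correct. Note that Fact~\ref{schur functions fact} already asserts that each individual $s_\lambda(\rho)$ is irreducible, so the only genuine content in your inner-product step is that the \emph{product} across distinct $\rho$'s remains irreducible; this is precisely where the isometry property of the isomorphism in Fact~\ref{decomposition fact} (i.e., orthogonality of the $R(\rho)$'s under the Hopf-algebra pairing) is needed, and you have identified that correctly as the crux.
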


For our purposes, we do not need an explicit parametrization of the set $\CC$.

\subsection{Pieri's formula}

The Pieri's formula plays a central role in the proof of Theorem \ref{first main theorem}. We find it convenient to use the following (non-standard) notation.

\begin{notation}
For any $\lambda, \mu \in \PP$ and $r\in \Z_+$, we write $\mu \sim \lambda + r$ if the Young diagram of $\mu$ can be obtained by adding $r$ boxes to the Young diagram of $\lambda$ with no two boxes added in the same column. Similarly, we write $\lambda \sim \mu - r$ if the Young diagram of $\lambda$ can be obtained by removing $r$ boxes from the Young diagram of $\mu$ with no two boxes removed from the same column. (Thus, one has $\mu \sim \lambda + r$ if and only if $\lambda \sim \mu - r$.)
\end{notation}

\begin{fact}[Pieri's formula {\cite[Chapter 1, (5.16)]{Mac}}] \label{pieri formula}
For each $\lambda\in\PP$ and $r\in \Z_+$, one has
\begin{equation*}
s_\lambda h_r = \sum_{\mu \sim \lambda+r} s_\mu
\end{equation*}
\end{fact}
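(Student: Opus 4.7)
The plan is to prove Pieri's formula by a direct bijective argument on semistandard Young tableaux. Recall that $s_\lambda = \sum_T x^T$, summed over semistandard Young tableaux (SSYT) $T$ of shape $\lambda$ with entries in $\{1,2,\ldots\}$, where $x^T = \prod_i x_i^{m_i(T)}$ and $m_i(T)$ counts the entries of $T$ equal to $i$. Likewise $h_r = \sum_S x^S$, summed over one-row SSYT $S$ of shape $(r)$, since a weakly increasing sequence $s_1 \leq \cdots \leq s_r$ is exactly such a tableau. Hence $s_\lambda h_r$ expands as a sum over pairs $(T, S)$ of monomials $x^T x^S$, and the goal is to match these pairs bijectively with SSYT $U$ of shape $\mu$, summed over the partitions $\mu$ with $\mu \sim \lambda + r$.

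I would construct the bijection by appending the entries of $S$, processed in weakly increasing order, to the rows of $T$. Each entry $s_j$ is attached as a new box at the right end of the lowest-indexed row $i$ such that (i) the entry immediately to the left (or $0$ if that row was empty in $T$) is $\leq s_j$, and (ii) the column of the new box has not already been occupied by a previously added box. This forces the $r$ added cells to form a horizontal strip $\mu/\lambda$, and the resulting filling $U$ is an SSYT of shape $\mu$ with $x^U = x^T x^S$. Conversely, given an SSYT $U$ of shape $\mu$ with $\mu/\lambda$ a horizontal strip of size $r$, one removes the strip to recover $T$ and lists the removed entries in weakly increasing order to recover $S$.

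The main obstacle will be verifying that the placement rule is well-defined (there is always an eligible row, and the output is both row-weak and column-strict) and that the two maps really are mutually inverse; in particular one must check that the column-disjointness of the added boxes is forced precisely by the weak monotonicity of $S$ together with the semistandard condition. This is a routine but careful case analysis using the row-weak and column-strict inequalities of SSYT. An alternative, more algebraic route avoids the bijection altogether: one compares coefficients of $y^r$ in the skew Cauchy-type identity
\begin{equation*}
s_\lambda(x)\prod_i (1-x_i y)^{-1} \;=\; \sum_\mu s_\mu(x)\, s_{\mu/\lambda}(y),
\end{equation*}
and uses that the single-variable skew Schur function $s_{\mu/\lambda}(y)$ equals $y^{|\mu|-|\lambda|}$ when $\mu/\lambda$ is a horizontal strip and $0$ otherwise; this is essentially the route taken in Macdonald's book.
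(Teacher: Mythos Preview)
The paper does not prove this statement; it is quoted as a Fact with a citation to Macdonald, so there is no proof in the paper to compare against. Your proposal must therefore stand on its own.

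The bijective construction, as you describe it, does not work. Your placement rule ``append $s_j$ at the right end of the lowest-indexed row whose last entry is $\leq s_j$'' never moves any entry already present in $T$ and ignores the column-strictness constraint coming from the cell \emph{above} the new box. Take $\lambda=(2,1)$ with $T$ having rows $[1,2]$ and $[3]$, and $S=[1]$. Neither row of $T$ ends in an entry $\leq 1$, so your rule places the $1$ in the empty third row; the first column then reads $1,3,1$, which is not semistandard. Dually, your proposed inverse is not injective: with $\lambda=(1)$ and $r=2$, both $U=[1,2,3]$ of shape $(3)$ and $U'=\big([1,3],[2]\big)$ of shape $(2,1)$ restrict to $T=[1]$ with removed entries $\{2,3\}$, hence both map back to the same pair $([1],[2,3])$, while your forward map sends that pair only to the first. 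The standard bijective proof repairs this by using RSK row-insertion with bumping: inserting a weakly increasing word into $T$ changes the shape by a horizontal strip (the row-bumping lemma), and bumping is precisely what makes the map invertible. Your ``no bumping'' shortcut is the missing idea.

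Your alternative algebraic route is sound in outline. Specializing the skew Cauchy identity
\[
\sum_{\mu} s_\mu(x)\,s_{\mu/\lambda}(z)\;=\;s_\lambda(x)\prod_{i,j}(1-x_i z_j)^{-1}
\]
to a single variable $z=y$ and comparing coefficients of $y^r$ yields Pieri, since $s_{\mu/\lambda}(y)$ in one variable is $y^{|\mu|-|\lambda|}$ for a horizontal strip and $0$ otherwise. Just be sure you can establish that identity (or the branching rule $s_\mu(x,y)=\sum_\nu s_\nu(x)\,s_{\mu/\nu}(y)$) without already invoking Pieri; Macdonald does so via the determinantal definition of $s_\lambda$, so this path is legitimate.
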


To apply Pieri's formula to representations of the groups $G_n$, we shall use the following notation.

\begin{notation}
For any functions $\la, \muu: \CC \to \PP$ and $r\in \Z_+$,  we write $\muu \sim \la + r$ if $\muu(\iota) \sim \la(\iota) + r$, and $\muu(\rho)=\la(\rho)$ for all $\rho\neq \iota$. Similarly, we write $\la \sim \muu - r$ if $\la(\iota) \sim \muu(\iota) - r$, and $\la(\rho)=\muu(\rho)$ for all $\rho\neq \iota$.
\end{notation}

\begin{lemma} \label{induction lemma}
Let $m, r\in \Z_+$. Suppose that $\la:\CC \to \PP$ is a function such that $\|\la\|=m$. Then
\begin{equation*}
\varphi(\la) \times \iota_r = \bigoplus_{\muu \sim \la + r} \varphi(\muu).
\end{equation*}
\end{lemma}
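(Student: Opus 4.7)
The plan is to reduce the identity to Pieri's formula in the ring of symmetric functions by transporting everything through the ring isomorphisms of Facts \ref{decomposition fact} and \ref{schur functions fact}. All the real content is already packed into those facts and Pieri's formula; the lemma is essentially a bookkeeping exercise inside the ring $R$.

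First, I would work in the commutative graded ring $R = \bigoplus_{n \in \Z_+} R(G_n)$ equipped with the product $\times$, and note that $\iota_r = h_r(\iota)$ by Fact \ref{trivial rep fact}, while $\varphi(\la) = \prod_{\rho \in \CC} s_{\la(\rho)}(\rho)$ by Fact \ref{classification fact}. Thus, in $R$,
\begin{equation*}
\varphi(\la) \times \iota_r \;=\; \Bigl(\prod_{\rho \in \CC} s_{\la(\rho)}(\rho)\Bigr) \cdot h_r(\iota).
\end{equation*}

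Next I would separate the cuspidal factor indexed by $\iota$ from the others. Since the multiplication map $\bigotimes_{\rho \in \CC} R(\rho) \to R$ in Fact \ref{decomposition fact} is a ring isomorphism, the factors $h_r(\iota)$ and $s_{\la(\iota)}(\iota)$, both living in $R(\iota)$, can be multiplied first, and the product then combined with the remaining factors $s_{\la(\rho)}(\rho)$ for $\rho \neq \iota$. This rewrites the expression as
\begin{equation*}
\Bigl(s_{\la(\iota)}(\iota) \cdot h_r(\iota)\Bigr) \cdot \prod_{\rho \neq \iota} s_{\la(\rho)}(\rho).
\end{equation*}

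Now I would apply Pieri's formula (Fact \ref{pieri formula}) inside $\La$, namely $s_{\la(\iota)} h_r = \sum_{\mu \sim \la(\iota) + r} s_\mu$, and transport it to $R(\iota)$ via the ring isomorphism $\La \xrightarrow{\sim} R(\iota)$ of Fact \ref{schur functions fact}. This gives $s_{\la(\iota)}(\iota) \cdot h_r(\iota) = \sum_{\mu \sim \la(\iota) + r} s_\mu(\iota)$. Substituting back and applying Fact \ref{classification fact} once more produces
\begin{equation*}
\sum_{\mu \sim \la(\iota) + r} s_\mu(\iota) \prod_{\rho \neq \iota} s_{\la(\rho)}(\rho) \;=\; \sum_{\muu \sim \la + r} \varphi(\muu),
\end{equation*}
where the last equality is just the definition of $\muu \sim \la + r$ (the partition $\muu(\iota)$ equals one of the $\mu$'s while $\muu(\rho) = \la(\rho)$ for $\rho \neq \iota$).

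There is no real obstacle in this proof; the only thing to be careful about is ensuring that Pieri's formula, a priori an identity in $\La$, is correctly transported into $R(\iota)$ using the ring isomorphism of Fact \ref{schur functions fact}, and that the isomorphism of Fact \ref{decomposition fact} lets us apply it inside the single tensor factor $R(\iota)$ rather than having to track cancellations across all cuspidals.
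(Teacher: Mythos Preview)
Your proposal is correct and follows essentially the same approach as the paper's own proof: both rewrite $\varphi(\la)\times\iota_r$ as $\bigl(\prod_{\rho} s_{\la(\rho)}(\rho)\bigr)\times h_r(\iota)$ via Facts \ref{trivial rep fact} and \ref{classification fact}, apply Pieri's formula inside the factor $R(\iota)$ using Facts \ref{schur functions fact} and \ref{pieri formula}, and then reassemble the resulting sum of Schur-type products as $\bigoplus_{\muu\sim\la+r}\varphi(\muu)$ via Fact \ref{classification fact}. Your version merely spells out more explicitly the role of the tensor decomposition in Fact \ref{decomposition fact} in isolating the $R(\iota)$ factor, which the paper leaves implicit.
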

\begin{proof}
One has:
\begin{align*}
\varphi(\la) \times \iota_r &= \left(\prod_{\rho\in\CC} s_{\la(\rho)}(\rho)\right) \times h_r(\iota) &\mbox{(by Facts \ref{trivial rep fact} and \ref{classification fact})}
\\
&= \bigoplus_{\mu \sim \la(\iota)+r} \left(s_{\mu}(\iota) \times \prod_{\rho\neq\iota} s_{\la(\rho)}(\rho)\right) &\mbox{(by Facts \ref{schur functions fact} and \ref{pieri formula})} \\
&= \bigoplus_{\muu \sim \la + r} \varphi(\muu) & \mbox{(by Fact \ref{classification fact}).} 
\end{align*}
\end{proof}

Suppose $m, r\in \Z_+$ and $n=m+r$. If $\pi$ is a representation of $G_n$, then $\pi^{U_{m,r}}$ is a representation of $G_{m,r}$, and $\pi^{H_{m,r}}$ is a representation of $G_m$.

\begin{lemma} \label{restriction lemma}
Let $m, r\in \Z_+$. Suppose that $\muu: \CC \to \PP$ is a function such that $\|\muu\|=m+r$. Then
\begin{equation*}
\varphi(\muu)^{H_{m,r}} = \bigoplus_{\la \sim \muu - r} \varphi(\la).
\end{equation*}
\end{lemma}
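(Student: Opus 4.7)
The plan is to compute $\varphi(\muu)^{H_{m,r}}$ by determining the multiplicity of each irreducible $G_m$-representation $\varphi(\la)$ in it via Frobenius reciprocity, then appeal to semisimplicity.

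First I would note the structural observation that $H_{m,r}$ contains $U_{m,r}$ as a normal subgroup, with quotient isomorphic to the copy of $G_r$ sitting inside $P_{m,r}$ as matrices with $g_{11}=1_m$ and $g_{12}=0$; this copy of $G_r$ commutes with the copy of $G_m$ sitting in $G_{m,r}$. Consequently, for any representation $\pi$ of $G_{m+r}$, taking $H_{m,r}$-invariants can be performed in stages: $\pi^{H_{m,r}} = (\pi^{U_{m,r}})^{G_r}$, where the outer invariants are those of the ``bottom-right'' copy of $G_r$. In particular, $\pi^{H_{m,r}}$ inherits the structure of a $G_m$-representation from the $G_{m,r}$-action on $\pi^{U_{m,r}}$.

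Next I would apply Frobenius reciprocity for parabolic induction. For any irreducible $G_m$-representation $\varphi(\la)$, treated as a $P_{m,r}$-representation via the quotient $P_{m,r}/U_{m,r} \cong G_{m,r}$ with trivial action of the $G_r$-factor, the parabolic induction is precisely $\varphi(\la)\times \iota_r$. Adjunction (together with triviality of the $G_r$-action on $\varphi(\la)\boxtimes\iota_r$) gives
\begin{equation*}
\Hom_{G_m}\!\bigl(\varphi(\la),\,\varphi(\muu)^{H_{m,r}}\bigr) \;=\; \Hom_{G_m\times G_r}\!\bigl(\varphi(\la)\boxtimes\iota_r,\,\varphi(\muu)^{U_{m,r}}\bigr) \;=\; \Hom_{G_{m+r}}\!\bigl(\varphi(\la)\times\iota_r,\,\varphi(\muu)\bigr).
\end{equation*}
By Lemma \ref{induction lemma}, $\varphi(\la)\times \iota_r = \bigoplus_{\muu'\sim \la+r} \varphi(\muu')$, so by Schur's lemma the right-hand side has dimension $1$ if $\muu\sim\la+r$ (equivalently, $\la\sim\muu-r$) and $0$ otherwise.

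Finally, since $\kk$ has characteristic zero, $\varphi(\muu)^{H_{m,r}}$ is a semisimple $G_m$-module, and the multiplicity computation above determines it completely: each $\varphi(\la)$ with $\la\sim\muu-r$ appears with multiplicity one, and no other irreducible $G_m$-representation appears. This gives the stated decomposition. The only subtle point is keeping the adjunction straight between parabolic induction and the Jacquet functor $\pi\mapsto \pi^{U_{m,r}}$, but in characteristic zero invariants and coinvariants agree, so either version of Frobenius reciprocity works without modification; there is no real obstacle.
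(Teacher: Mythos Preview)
Your proof is correct and follows essentially the same route as the paper: both compute the multiplicity of $\varphi(\la)$ in $\varphi(\muu)^{H_{m,r}}$ via Frobenius reciprocity for parabolic induction together with Lemma~\ref{induction lemma}, using the identification $\varphi(\muu)^{H_{m,r}} = (\varphi(\muu)^{U_{m,r}})^{G_r}$. The only cosmetic difference is the direction of the $\Hom$ spaces, which is immaterial by semisimplicity in characteristic zero.
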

\begin{proof}
Let $n=m+r$. For any function $\la : \CC\to\PP$ with $\|\la\|=m$, the multiplicity of $\varphi(\la)\boxtimes \iota_r$ in $\varphi(\muu)^{U_{m,r}}$ is:
\begin{eqnarray*}
\dim \Hom_{G_{m,r}} \left( \varphi(\muu)^{U_{m,r}}, \varphi(\la)\boxtimes \iota_r \right) 
&=& \dim \Hom_{G_n} \left( \varphi(\muu), \varphi(\la) \times \iota_r ) \right) \\
&=& \left\{ \begin{array}{ll} 
1 & \mbox{ if } \la \sim \muu - r,\\
0 & \mbox{ else, }
\end{array} \right.
\end{eqnarray*}
where the first equality follows from Frobenius reciprocity for parabolic induction \cite[\S8.1]{Zelevinsky}, and the second equality follows from Lemma \ref{induction lemma}.

Since $\varphi(\muu)^{H_{m,r}} = \left(\varphi(\muu)^{U_{m,r}}\right)^{G_r}$, the result follows.
\end{proof}

\section{Weak stability and weight boundedness} \label{stability degree and weight section}

In this section, we define the notions of weak stability and weight boundedness for a $\VI$-module, and prove that every finitely generated $\VI$-module is weakly stable and weight bounded.

\subsection{The VI-module {\textit{\textbf{M(m)}}}}

For each $m\in\Z_+$, define a $\VI$-module $M(m)$ by
\begin{equation*}
M(m)(-) = \kk \Hom_{\VI}(\mathbf{m}, - ),
\end{equation*}
that is, $M(m)$ is the composition of the functor $\Hom_{\VI}(\mathbf{m}, -)$ followed by the free $\kk$-module functor. It is plain (see, for example, \cite[Lemma 2.14]{GL-Noetherian}) that a $\VI$-module $V$ is finitely generated if and only if there exists a surjective homomorphism
\begin{equation*}
M(m_1) \oplus \cdots \oplus M(m_d) \longrightarrow V \quad \mbox{ for some } m_1,\ldots,m_d \in \Z_+.
\end{equation*} 

\begin{lemma}  \label{description of a free module}
Suppose $m,r\in\Z_+$ and $n=m+r$. Then
\begin{equation*}
M(m)_n = \pi \times \iota_r,
\end{equation*}
where $\pi$ is the regular representation of $G_m$.
\end{lemma}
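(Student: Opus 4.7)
The plan is to realize $M(m)_n$ as an induced representation and then apply induction in stages through the parabolic subgroup $P_{m,r}$. By construction $M(m)_n$ is the permutation $\kk G_n$-module on the set $\Hom_{\VI}(\mathbf{m}, \mathbf{n})$ of injective linear maps $\F_q^m \hookrightarrow \F_q^n$, with $G_n$ acting by post-composition. This action is transitive: given any two injections one can choose a basis of $\F_q^n$ that sends the image of one onto the image of the other and extends a specified matching on the images. A direct matrix computation shows that the stabilizer of the standard inclusion is exactly $H_{m,r}$, since $g \in G_n$ fixes $v \mapsto (v, 0)$ for all $v$ iff its top-left block is $1_m$ and its bottom-left block vanishes. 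Hence $M(m)_n \cong \mathrm{Ind}_{H_{m,r}}^{G_n} \mathbf{1}$, where $\mathbf{1}$ is the trivial representation.

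Next I would identify $\mathrm{Ind}_{H_{m,r}}^{P_{m,r}} \mathbf{1}$. A quick conjugation calculation confirms that $H_{m,r}$ is normal in $P_{m,r}$, and the projection to the top-left block induces an isomorphism $P_{m,r}/H_{m,r} \cong G_m$. Therefore the induced module is the group algebra $\kk[P_{m,r}/H_{m,r}] \cong \kk[G_m] = \pi$, viewed as a $P_{m,r}$-module via the quotient map $P_{m,r} \twoheadrightarrow G_m$. On the other hand, $\pi \boxtimes \iota_r$ as a $P_{m,r}$-module is, by definition, the pullback of $\pi \boxtimes \iota_r$ along $P_{m,r} \twoheadrightarrow P_{m,r}/U_{m,r} \cong G_{m,r} = G_m \times G_r$. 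Since $\iota_r$ is trivial, the $G_r$-factor also acts trivially, so this pullback factors through the further quotient by (the image of) $G_r$, which together with $U_{m,r}$ generates $H_{m,r}$. Both descriptions thus agree with the pullback of $\pi$ along $P_{m,r} \twoheadrightarrow P_{m,r}/H_{m,r} \cong G_m$, giving $\mathrm{Ind}_{H_{m,r}}^{P_{m,r}} \mathbf{1} \cong \pi \boxtimes \iota_r$ as $P_{m,r}$-modules.

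Combining the two steps by induction in stages yields
\begin{equation*}
M(m)_n \;\cong\; \mathrm{Ind}_{H_{m,r}}^{G_n} \mathbf{1} \;\cong\; \mathrm{Ind}_{P_{m,r}}^{G_n}\bigl(\mathrm{Ind}_{H_{m,r}}^{P_{m,r}} \mathbf{1}\bigr) \;\cong\; \mathrm{Ind}_{P_{m,r}}^{G_n}(\pi \boxtimes \iota_r) \;=\; \pi \times \iota_r,
\end{equation*}
as claimed. The main piece of bookkeeping is the identification in the second paragraph: verifying that the two natural surjections $P_{m,r} \twoheadrightarrow G_m$ (one obtained by killing $H_{m,r}$, the other by killing $U_{m,r}$ and then projecting the resulting $G_m \times G_r$ onto its first factor) are the same map. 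Once this is noted the rest is routine, and no deep input beyond Frobenius reciprocity and the normality of $H_{m,r}$ in $P_{m,r}$ is needed.
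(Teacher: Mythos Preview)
Your proof is correct and follows essentially the same route as the paper. The paper compresses your induction-in-stages argument into the single tensor identity $\kk[G_n/H_{m,r}] = \kk[G_n] \otimes_{\kk[P_{m,r}]} \kk[G_m]$, which encodes precisely that $\mathrm{Ind}_{H_{m,r}}^{G_n}\mathbf{1} \cong \mathrm{Ind}_{P_{m,r}}^{G_n}\kk[G_m]$ with $\kk[G_m]$ carrying the $P_{m,r}$-action through the quotient $P_{m,r}/H_{m,r}\cong G_m$; your second paragraph is just an explicit unpacking of why that $P_{m,r}$-module is $\pi\boxtimes\iota_r$.
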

\begin{proof}
The group $G_n$ acts transitively on $\Hom_{\VI}(\mathbf{m}, \mathbf{n})$ and the stabilizer of the standard inclusion $\mathbf{m} \hookrightarrow \mathbf{n}$ is $H_{m,r}$. The result follows from the isomorphism
\begin{equation*}
\kk[G_n/H_{m,r}] = \kk[G_n] \otimes_{\kk[P_{m,r}]} \kk[G_m].
\end{equation*}
\end{proof}

\subsection{Weak stability}
Consider a consistent sequence (\ref{sequence of representations}) with $G_n=\GL_n(\F_q)$. Suppose $m, r\in \Z_+$ and $n=m+r$. The map $\phi_n : V_n \to V_{n+1}$ descends to a map
\begin{equation} \label{map on coinvariants}
\phi_{m,r} : (V_n)_{H_{m,r}} \longrightarrow (V_{n+1})_{H_{m,r+1}}
\end{equation}
which is a homomorphism of representations of the group $G_m$.

\begin{definition} \label{weakly stable}
A consistent sequence $\{V_n, \phi_n\}$ with $G_n=\GL_n(\F_q)$ is \emph{weakly stable} if for each $m\in\Z_+$, there exists $s\in \Z_+$ such that for each $r\geqslant s$, the map $\phi_{m,r}$ of (\ref{map on coinvariants}) is an isomorphism. A $\VI$-module $V$ is \emph{weakly stable} if the consistent sequence obtained from $V$ is weakly stable (see Notation \ref{consistent sequence arising from V}).
\end{definition}

One can also define a stronger notion by requiring that the integer $s$ in Definition \ref{weakly stable} can be chosen independently of $m\in\Z_+$ (see \cite[Definition 3.1.3]{CEF}). (Unlike \cite{CEF}, we will not need to use this stronger notion.)

\begin{remark} \label{surjectivity eventually gives bijectivity}
Consider a consistent sequence $\{V_n, \phi_n\}$ with $G_n=\GL_n(\F_q)$. Supppose that $\dim(V_n) < \infty$ for every $n\in\Z_+$. Let $m\in\Z_+$, and consider the sequence of maps
\begin{equation*}
\xymatrix{  (V_m)_{H_{m,0}} \ar[r]^-{\phi_{m,0}} & (V_{m+1})_{H_{m,1}} \ar[r]^-{\phi_{m,1}} & (V_{m+2})_{H_{m,2}} \ar[r]^-{\phi_{m,2}} & \cdots.  } 
\end{equation*}
It is plain that if $\phi_{m,r}$ is surjective for all $r$ sufficiently large, then $\phi_{m,r}$ is bijective for all $r$ sufficiently large.
\end{remark}

\begin{lemma} \label{M is weakly stable}
For each $m\in\Z_+$, the $\VI$-module $M(m)$ is weakly stable.
\end{lemma}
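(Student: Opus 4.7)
My plan is to exploit the explicit description $M(m)_n = \kk\Hom_{\VI}(\mathbf{m}, \mathbf{n})$ from Lemma~\ref{description of a free module} and analyze the $H_{m',r'}$-coinvariants directly. Since $M(m)_n$ is a permutation module on the set $\Hom_{\VI}(\mathbf{m}, \mathbf{n})$, the coinvariants $(M(m)_n)_{H_{m',r'}}$ carry a canonical $\kk$-basis indexed by the $H_{m',r'}$-orbits on this set (where $n = m'+r'$), and the map $\phi_{m',r'}$ is simply the $\kk$-linear map induced by the map of orbit sets that comes from composing with the standard inclusion $\mathbf{n} \hookrightarrow \mathbf{n+1}$. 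Here I write $m'$ and $r'$ for the indices appearing in Definition~\ref{weakly stable}, to distinguish them from the fixed $m$ of $M(m)$.

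To each injection $f \colon \mathbf{m} \hookrightarrow \mathbf{n}$ I attach the pair $(U_f, f|_{U_f})$, where $U_f := f^{-1}(\mathbf{m'}) \subseteq \mathbf{m}$ and $f|_{U_f} \colon U_f \hookrightarrow \mathbf{m'}$. Because $H_{m',r'}$ fixes $\mathbf{m'}$ pointwise, this pair is an invariant of the $H_{m',r'}$-orbit of $f$. The central step is to show that, conversely, $(U_f, f|_{U_f})$ is a complete orbit invariant as soon as $r' \geq m - \dim U_f$. To see this I would fix a vector-space complement $U' \subseteq \mathbf{m}$ of $U_f$ and note that $f$ is determined up to the $H_{m',r'}$-action by the induced injection $\bar{f} \colon U' \hookrightarrow \mathbf{n}/\mathbf{m'}$: the Levi factor $G_{r'} \subseteq H_{m',r'}$ acts transitively on such injections provided $\dim U' \leq r'$, and the unipotent radical $U_{m',r'}$ then adjusts the choice of lift of $\bar{f}$ from $\mathbf{n}/\mathbf{m'}$ back to $\mathbf{n}$.

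Setting $s = m$ in Definition~\ref{weakly stable}, the inequality $r' \geq m - \dim U_f$ is automatic for every $r' \geq m$, so the $H_{m',r'}$-orbits on $\Hom_{\VI}(\mathbf{m}, \mathbf{n})$ are in bijection with the $r'$-independent set of pairs $(U \subseteq \mathbf{m},\ U \hookrightarrow \mathbf{m'})$. Since the standard inclusion $\mathbf{n} \hookrightarrow \mathbf{n+1}$ fixes $\mathbf{m'}$ pointwise, post-composing $f$ with it preserves the pair $(U_f, f|_{U_f})$. Hence $\phi_{m',r'}$ is the identity on this orbit parametrization, and is therefore a $\kk$-linear isomorphism for every $r' \geq m$, which shows that $M(m)$ is weakly stable.

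The main obstacle will be the orbit-transitivity check in the second paragraph: combining the $G_{r'}$-transitivity on injections into $\mathbf{n}/\mathbf{m'}$ with the action of $U_{m',r'}$ on lifts, so as to verify that any two injections $f, f'$ with the same pair $(U_f, f|_{U_f})$ lie in a common $H_{m',r'}$-orbit. Once this is in place, the rest is routine bookkeeping with the orbit parameters.
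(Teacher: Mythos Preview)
Your argument is correct: classifying the $H_{m',r'}$-orbits on $\Hom_{\VI}(\mathbf{m},\mathbf{n})$ by the invariant $(U_f,f|_{U_f})$ works, and the transitivity step (Levi factor moves $\bar f$, unipotent radical fixes the lift) goes through exactly as you describe once $r'\geqslant m-\dim U_f$. Taking $s=m$ then yields a bijection of orbit sets that $\phi_{m',r'}$ respects, so the induced map on coinvariants is an isomorphism for all $r'\geqslant m$. (Minor point: the formula $M(m)_n=\kk\Hom_{\VI}(\mathbf m,\mathbf n)$ is the definition of $M(m)$, not the content of Lemma~\ref{description of a free module}.)

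The paper's proof is shorter and avoids the full orbit classification. It only shows \emph{surjectivity} of $\phi_{\ell,r}$ for $r\geqslant m$: viewing an element of $M(m)_{n+1}$ as an $(n{+}1)\times m$ matrix $A$ of rank $m$, the bottom $(r{+}1)$ rows of $A$ have rank at most $m\leqslant r<r{+}1$, so there is $g\in H_{\ell,r+1}$ whose bottom-right block kills the last row of $A$, placing $gA$ in the image of $\phi_n$. Bijectivity then follows from Remark~\ref{surjectivity eventually gives bijectivity} (eventual surjectivity plus finite-dimensionality forces eventual bijectivity). Your route gives more: an explicit $r'$-independent parametrization of the coinvariants as $G_{m'}$-sets, and a direct proof of bijectivity without appealing to a dimension-count argument. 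The paper's route buys brevity and sidesteps the lift/transitivity verification you flagged as the main obstacle.
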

\begin{proof}
Let $\ell \in \Z_+$. We claim that for each $r\geqslant m$, the map
\begin{equation*}
\phi_{\ell, r} : (M(m)_n)_{H_{\ell,r}} \longrightarrow (M(m)_{n+1})_{H_{\ell,r+1}} \qquad \mbox{(where $n=\ell+r$)}
\end{equation*}
is surjective. By Remark \ref{surjectivity eventually gives bijectivity}, this will imply that $M(m)$ is weakly stable.

Suppose $r\geqslant m$ and $n=\ell+r$. We write the elements of $M(m)_{n+1}$ as $(n+1)\times m$-matrices of rank $m$. For every such matrix $A$, we can multiply it on the left by an element $g$ of $H_{\ell,r+1}$ so that the last row of $gA$ is zero. The element $gA$ lies in the image of $\phi_n : M(m)_n \to M(m)_{n+1}$.
It follows that $\phi_{\ell, r}$ is surjective, as claimed.
\end{proof}

\begin{proposition} \label{fg implies weakly stable}
Let $V$ be a finitely generated $\VI$-module. Then $V$ is weakly stable.
\end{proposition}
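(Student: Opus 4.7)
The plan is to reduce weak stability for an arbitrary finitely generated $V$ to the case of the standard projectives $M(m)$, which has already been handled in Lemma \ref{M is weakly stable}. Since $V$ is finitely generated, we may fix a surjective homomorphism $M \twoheadrightarrow V$, where $M = M(m_1) \oplus \cdots \oplus M(m_d)$. I would first observe that $M$ itself is weakly stable: the coinvariants functor $(-)_{H_{\ell,r}}$ commutes with direct sums, so for each $\ell\in\Z_+$ we may choose an integer $s_\ell$ (a maximum of the ones guaranteed by Lemma \ref{M is weakly stable} for the summands) such that the coinvariants map for $M$ is an isomorphism for all $r\geqslant s_\ell$.

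Next, fix $\ell\in\Z_+$ and $r\geqslant s_\ell$, set $n=\ell+r$, and consider the commutative square
\begin{equation*}
\xymatrix{
(M_n)_{H_{\ell,r}} \ar@{->>}[r] \ar[d]_{\cong}
& (V_n)_{H_{\ell,r}} \ar[d]^{\phi_{\ell,r}} \\
(M_{n+1})_{H_{\ell,r+1}} \ar@{->>}[r]
& (V_{n+1})_{H_{\ell,r+1}}
}
\end{equation*}
The two horizontal arrows are surjective because $(-)_{H_{\ell,r}}$ is right exact (coinvariants with respect to a finite group is right exact over any ring, and in any case we work in characteristic zero). The left vertical arrow is an isomorphism by our choice of $r$. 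Chasing the square, the composite from the top-left to the bottom-right is surjective, which forces $\phi_{\ell,r}$ to be surjective.

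To upgrade surjectivity to bijectivity, I would invoke Remark \ref{surjectivity eventually gives bijectivity}. This requires $\dim(V_n) < \infty$ for each $n$, which holds because $V_n$ is a quotient of $M_n$, and each $M(m_i)_n = \kk\Hom_\VI(\mathbf{m_i},\mathbf{n})$ is finite dimensional. Hence $\phi_{\ell,r}$ is an isomorphism for all sufficiently large $r$, proving that $V$ is weakly stable.

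There is no real obstacle here; the argument is a short diagram chase once one recognizes that the two ingredients needed -- right exactness of coinvariants and weak stability of the free $\VI$-modules -- are already in hand. The only subtlety worth flagging is to be explicit that coinvariants commute with arbitrary direct sums and are right exact, so that the surjection $M \twoheadrightarrow V$ remains a surjection after passing to $H_{\ell,r}$-coinvariants; this is what makes the reduction to Lemma \ref{M is weakly stable} work cleanly.
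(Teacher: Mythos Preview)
Your proof is correct and follows essentially the same route as the paper's: reduce to a surjection $M=\bigoplus M(m_i)\twoheadrightarrow V$, use weak stability of $M$ from Lemma~\ref{M is weakly stable}, push surjectivity through the commutative square of coinvariants, and then apply Remark~\ref{surjectivity eventually gives bijectivity}. You are merely a bit more explicit than the paper in justifying that coinvariants commute with direct sums, that they are right exact, and that $\dim V_n<\infty$.
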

\begin{proof}
Since $V$ is finitely generated, there exists $m_1,\ldots, m_d\in \Z_+$ and a surjective homomorphism $M\to V$ where $M=M(m_1)\oplus\cdots\oplus M(m_d)$. Let $m\in \Z_+$. By Lemma \ref{M is weakly stable}, the $\VI$-module $M$ is weakly stable. Thus, there exists $s\in \Z_+$ such that 
\begin{equation*}
\phi_{m,r} : (M_{m+r})_{H_{m,r}} \longrightarrow (M_{m+r+1})_{H_{m,r+1}}
\end{equation*}
is an isomorphism for every $r\geqslant s$. In the following commuting diagram
\begin{equation*}
\xymatrix{ (M_{m+r})_{H_{m,r}} \ar[rr]^-{\phi_{m,r}}  \ar[d] && (M_{m+r+1})_{H_{m,r+1}} \ar[d] \\
(V_{m+r})_{H_{m,r}} \ar[rr]_-{\phi_{m,r}} && (V_{m+r+1})_{H_{m,r+1}} }
\end{equation*}
the two vertical maps are surjective, so if the top horizontal map is surjective, then so is the lower horizontal map. By Remark \ref{surjectivity eventually gives bijectivity}, the result follows.
\end{proof}

\subsection{Weight boundedness}

Recall that for every function $\muu: \CC \to \PP$ with $\|\muu\| < \infty$, there is a unique function $\la : \CC \to \PP$ such that $\muu = \la[n]$, where $n=\|\muu\|$. The following definition is analogous to \cite[Definition 3.2.1]{CEF}.

\begin{definition}
A consistent sequence $\{V_n, \phi_n\}$ with $G_n=\GL_n(\F_q)$ is \emph{weight bounded} if there exists $a\in \Z_+$ such that for every $n\in\Z_+$ and every irreducible subrepresentation $\varphi(\la[n])$ of $V_n$, one has $\|\la\|\leqslant a $; we call the minimal such $a$ the \emph{weight} of the consistent sequence. A $\VI$-module $V$ is \emph{weight bounded} if the consistent sequence obtained from $V$ is weight bounded, and we define the \emph{weight} of $V$ to be the weight of its consistent sequence.
\end{definition}

\begin{proposition} \label{fg implies weight bounded}
Let $V$ be a finitely generated $\VI$-module. Then $V$ is weight bounded.
\end{proposition}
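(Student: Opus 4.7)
The plan is to mimic the reduction used in Proposition \ref{fg implies weakly stable}. The first step will be to fix a surjection $M(m_1) \oplus \cdots \oplus M(m_d) \twoheadrightarrow V$ coming from finite generation of $V$. Because $\kk$ has characteristic zero, every $G_n$-representation is semisimple, so each irreducible summand of $V_n$ is also an irreducible summand of $\bigoplus_i M(m_i)_n$, and hence of some $M(m_i)_n$. Therefore it suffices to bound the weight of each individual $M(m)$; I plan to show in fact that $M(m)$ has weight at most $m$, whence $V$ will have weight at most $\max_i m_i$.

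To bound the weight of $M(m)$, I would apply Lemma \ref{description of a free module} to write $M(m)_n = \pi \times \iota_r$, where $\pi$ is the regular representation of $G_m$ and $r = n - m$. Decomposing $\pi$ into irreducibles of $G_m$ and invoking Lemma \ref{induction lemma}, every irreducible constituent of $M(m)_n$ must have the form $\varphi(\muu)$ with $\muu \sim \nuu + r$ for some $\nuu$ with $\|\nuu\| = m$.

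The remaining task is to check that if $\muu \sim \nuu + r$ and $\|\nuu\| = m$, then, writing $\muu = \la[n]$ with $n = m+r$, one has $\|\la\| \leq m$. Because $\muu \sim \nuu + r$ modifies only the value at $\iota$, we get $\la(\rho) = \muu(\rho) = \nuu(\rho)$ for each $\rho \neq \iota$. Writing $\muu(\iota) = (\mu_1,\mu_2,\ldots)$ and $\nuu(\iota) = (\nu_1,\nu_2,\ldots)$, the horizontal-strip interlacing $\mu_i \geqslant \nu_i \geqslant \mu_{i+1}$ gives $\mu_{i+1} \leqslant \nu_i$ for every $i \geqslant 1$, so the identification $\la(\iota) = (\mu_2,\mu_3,\ldots)$ yields $|\la(\iota)| \leqslant |\nuu(\iota)|$. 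Weighting by $\mathrm{d}(\rho)$ and summing then produces $\|\la\| \leqslant \|\nuu\| = m$, as required.

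The only real subtlety is this last combinatorial bookkeeping, matching the horizontal-strip interlacing on the $\iota$-coordinate of $\muu$ to the reindexing that appears in the definition of $\la[n]$; everything else is a direct consequence of semisimplicity together with the Pieri-type identity already packaged in Lemma \ref{induction lemma}.
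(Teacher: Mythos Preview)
Your proof is correct and follows essentially the same route as the paper: reduce to the $\VI$-modules $M(m)$ via semisimplicity, then combine Lemma \ref{description of a free module} with Lemma \ref{induction lemma} to see that any irreducible constituent $\varphi(\muu)$ of $M(m)_n$ satisfies $\muu\sim\nuu+r$ for some $\nuu$ with $\|\nuu\|=m$. The only cosmetic difference is in the final combinatorial check: the paper observes that adding $r$ boxes in distinct columns forces $\mu_1\geqslant r$, hence $\|\la\|=n-\mu_1\leqslant m$, whereas you obtain the same bound via the interlacing inequalities $\mu_{i+1}\leqslant\nu_i$ summed over $i$.
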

\begin{proof}
Since every quotient of a weight bounded $\VI$-module is also weight bounded, it suffices to show that for each $m\in\Z_+$, the $\VI$-module $M(m)$ is weight bounded.

Let $m, r\in \Z_+$ and $n=m+r$. Let $\muu: \CC\to\PP$ be a function with $\|\muu\|=n$. Suppose that the irreducible representation $\varphi(\muu)$ of $G_n$ is a subrepresentation of $M(m)_n$. By Lemma \ref{induction lemma} and Lemma \ref{description of a free module}, there exists a function $\nuu : \CC\to \PP$ such that $\|\nuu\|=m$ and $\muu \sim \nuu+r$. In particular, the number of columns in the Young diagram of $\muu(\iota)$ is at least $r$. If $\muu=\la[n]$, then the number of columns in the Young diagram of $\muu(\iota)$ is $n-\|\la\|$, so
\begin{equation*}
\|\la\|  \leqslant n-r=m.
\end{equation*}
\end{proof}

\begin{remark}
The \emph{generating degree} of a nonzero $\VI$-module $V$ is the smallest $m\in\Z_+\cup \{\infty\}$ such that $V$ is generated by $\displaystyle\bigsqcup_{n=0}^m V_n$. The proof of the above proposition shows that if $V$ is nonzero and has generating degree $m$, then the weight of $V$ is at most $m$. 
\end{remark}

\section{Multiplicity stability} \label{multiplicity stability section}

In this section, we complete the proof of Theorem \ref{first main theorem} by showing that every $\VI$-module which is weakly stable and weight bounded is multiplicity stable.

\subsection{Proof of multiplicity stability}

The proof of the following key proposition is an adaptation of the arguments in the proof of \cite[Proposition 3.3.3]{CEF}.

\begin{proposition} \label{multiplicity stable proposition}
Let $\{V_n, \phi_n\}$ be a consistent sequence with $G_n=\GL_n(\F_q)$. Suppose that $\{V_n, \phi_n\}$ is weakly stable and weight bounded. Then there exists an integer $N$ such that for each $n\geqslant N$, the consistent sequence $\{V_n, \phi_n\}$ satisfies condition (RS3) in Definition \ref{representation stable definition}.
\end{proposition}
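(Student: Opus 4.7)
My plan is to adapt the Church-Ellenberg-Farb argument by combining Lemma \ref{restriction lemma} with weak stability (converted from coinvariants to invariants using characteristic zero) and then inducting on $m = \|\la\|$. First, since $\kk$ has characteristic zero and each $H_{m,r}$ is finite, the canonical composition $(V_n)^{H_{m,r}} \hookrightarrow V_n \twoheadrightarrow (V_n)_{H_{m,r}}$ is a $G_m$-equivariant isomorphism. Hence weak stability yields, for each fixed $m$, stabilization of the $G_m$-representation $(V_{m+r})^{H_{m,r}}$ for $r \geq s(m)$; write $d_m(\la)$ for the eventual isotypic multiplicity of $\varphi(\la)$ (for $\|\la\| = m$) and $c_n(\muu)$ for the multiplicity of $\varphi(\muu)$ in $V_n$. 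Applying Lemma \ref{restriction lemma} isotypic-by-isotypic then gives, for $n = m + r$ and $r \geq s(m)$,
\begin{equation*}
d_m(\la) \;=\; \sum_{\muu \,:\, \la \,\sim\, \muu - r} c_n(\muu).
\end{equation*}

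Next I invoke weight boundedness: if $a$ denotes the weight, then any $\muu$ with $c_n(\muu) \neq 0$ is of the form $\muu = \nuu[n]$ with $\|\nuu\| \leq a$. Unpacking the condition $\la \,\sim\, \nuu[n] - r$ (writing $\la(\iota) = (\lambda_1, \lambda_2, \ldots)$ and $\nuu(\iota) = (\nu_1, \nu_2, \ldots)$) forces $\nuu(\rho) = \la(\rho)$ for all $\rho \neq \iota$ together with the interlacing $\lambda_{i+1} \leq \nu_i \leq \lambda_i$ for all $i \geq 1$. Only finitely many such $\nuu$ exist; the choice $\nuu = \la$ always appears, and any other valid $\nuu$ has $\nu_i < \lambda_i$ for some $i$, hence $|\nuu(\iota)| < |\la(\iota)|$ and $\|\nuu\| < m$. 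For all $n$ large enough that every $\nuu[n]$ in this finite set is defined, the identity above rewrites as the unitriangular relation
\begin{equation*}
d_m(\la) \;=\; c_n(\la[n]) \;+\; \sum_{\substack{\nuu \text{ as above} \\ \|\nuu\| < m}} c_n(\nuu[n]).
\end{equation*}

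Finally, induct on $m$ from $0$ up to $a$. In the base case $m = 0$, only the zero function $\la = \mathbf{0}$ appears, the sum is empty, and $c_n(\la[n]) = d_0(\la)$ directly stabilizes (this is the multiplicity of the trivial representation of $G_n$). In the inductive step every term in the sum has $\|\nuu\| < m$ and so has already stabilized by the inductive hypothesis; hence $c_n(\la[n])$ is forced to stabilize as well. Since $\{\la : \|\la\| \leq a\}$ is finite (its support lies in the finite set $\bigsqcup_{k \leq a} \CC_k$, with $|\la(\rho)| \leq a$ for each $\rho$), taking $N$ to be the maximum of the finitely many resulting stabilization thresholds completes the verification of (RS3). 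The step I expect to be most delicate is the combinatorial identification of the valid $\nuu$'s: one must carefully match the ``adjoined first row'' of $\nuu[n]$ against the horizontal-strip condition $\la \sim \muu - r$ and confirm that $\nuu = \la$ is the unique candidate of maximal $\|\cdot\|$. Once this combinatorial lemma is in place the induction runs immediately.
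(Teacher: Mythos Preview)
Your proposal is correct and follows essentially the same route as the paper: both arguments take $H_{m,r}$-invariants, apply Lemma~\ref{restriction lemma}, and run an induction on $m=\|\la\|$ using the triangularity coming from the horizontal-strip combinatorics. The only cosmetic difference is that the paper compares $(V_n)^{H_{m,r}}$ with $(V_{n+1})^{H_{m,r+1}}$ directly and cancels the lower-weight part via the set equality $\{\muu\sim\la[n]-r\}=\{\muu\sim\la[n+1]-(r+1)\}$, whereas you package the same information as a unitriangular relation $d_m(\la)=c_n(\la[n])+\sum_{\|\nuu\|<m}c_n(\nuu[n])$ against the stable value $d_m(\la)$; the combinatorial core (your interlacing analysis showing $\nuu=\la$ is the unique contributor with $\|\nuu\|=m$) is exactly what the paper records in its two bullet points.
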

\begin{proof} 
Let $a$ be the weight of the consistent sequence $\{V_n, \phi_n\}$. By weak stability, we can choose $s\in \Z_+$ such that the map $\phi_{m,r}$ of (\ref{map on coinvariants}) is an isomorphism whenever $m\leqslant a$ and $r\geqslant s$. Let $N=\max\{ a+s, 2a \}$. 

For each $n\in \Z_+$, let
\begin{equation} \label{irreducible decomposition of V_n}
V_n = \bigoplus_{\|\la\| \leqslant a} \varphi(\la[n])^{\oplus c(\la, n)} \qquad \mbox{(where $0\leqslant c(\la,n)\leqslant \infty$)}
\end{equation}
be a decomposition of $V_n$ into a direct sum of irreducible representations of $G_n$. We claim that if $\|\la\|\leqslant a$ and $n\geqslant N$, then one has $c(\la,n)=c(\la,N)$. We shall prove the claim by induction on $\|\la\|$.

Let $m\in \Z_+$ such that $m\leqslant a$. Assume that $c(\la,n)=c(\la,N)$ whenever $\|\la\|<m$ and $n\geqslant N$. (This assumption is vacuously true for $m=0$.) 

Suppose $n\geqslant N$, and set $r=n-m$. Taking $H_{m,r}$-invariants on both sides of (\ref{irreducible decomposition of V_n}), and applying Lemma \ref{restriction lemma}, we obtain
\begin{eqnarray*}
(V_n)^{H_{m,r}} &=& \bigoplus_{\|\la\| \leqslant a} \left( \varphi(\la[n])^{H_{m,r}} \right)^{\oplus c(\la, n)} \\
&=& \bigoplus_{\|\la\| \leqslant a} \left( \bigoplus_{\muu\sim\la[n]-r}  \varphi(\muu) \right)^{\oplus c(\la, n)}.
\end{eqnarray*}
Keeping in mind that the number of columns in the Young diagram of $\la[n](\iota)$ is $n-\|\la\|$, we make the following observations:
\begin{itemize}
\item
If $\|\la\|>m$, then $n-\|\la\| < n - m = r$. In this case, there is no function $\muu:\CC\to\PP$ satisfying $\muu \sim \la[n]-r$.

\item
If $\|\la\|=m$, then $n-\|\la\| = n- m =r$. In this case, the only function $\muu:\CC\to\PP$ satisfying $\muu \sim \la[n]-r$ is $\muu=\la$.
\end{itemize}
Hence, we obtain
\begin{equation} \label{isomorphism after taking invariants}
(V_n)^{H_{m,r}} = \left( \bigoplus_{\|\la\|<m} \left( \bigoplus_{\muu\sim\la[n]-r}  \varphi(\muu) \right)^{\oplus c(\la, N)} \right) \oplus \left( \bigoplus_{\|\la\|=m} \varphi(\la)^{\oplus c(\la, n)} \right).
\end{equation}

Since $r=n-m\geqslant N-a \geqslant s$, the map $\phi_{m,r}$ of (\ref{map on coinvariants}) is an isomorphism, and so we have isomorphisms of $G_m$-representations
\begin{equation} \label{left hand side}
(V_n)^{H_{m,r}} \cong (V_n)_{H_{m,r}} \cong (V_{n+1})_{H_{m,r+1}} \cong (V_{n+1})^{H_{m,r+1}},
\end{equation}
where the first isomorphism is the composition of the inclusion map $(V_n)^{H_{m,r}}\to V_n$ and the quotient map $V_n \to (V_n)_{H_{m,r}}$ (and similarly for the third isomorphism).

We claim that if $\|\la\|\leqslant a$, then 
\begin{equation} \label{first term on right hand side}
\{ \muu \mid \muu \sim \la[n] - r \} = \{ \muu \mid \muu \sim \la[n+1] - (r+1) \}.
\end{equation}
It is clear that the left hand side is a subset of the right hand side. To see that the right hand side is contained in the left hand side, we note that
\begin{equation*}
r+1 > n-m \geqslant N - a \geqslant a \geqslant \|\la\| \geqslant |\la(\iota)|,
\end{equation*}
so if the Young diagram of $\muu(\iota)$ is obtained from the Young diagram of $\la[n+1](\iota)$ by the removal of $r+1$ boxes, then one of the $r+1$ boxes removed must be from the first row.

It follows from (\ref{isomorphism after taking invariants}), (\ref{left hand side}), and (\ref{first term on right hand side}) that we have an isomorphism of $G_m$-representations
\begin{equation*}
\bigoplus_{\|\la\|=m} \varphi(\la)^{\oplus c(\la, n)} \cong \bigoplus_{\|\la\|=m} \varphi(\la)^{\oplus c(\la, n+1)}.
\end{equation*}
Hence, if $\|\la\|=m$, then $c(\la, N)= c(\la, N+1) = c(\la, N+2) = \cdots$. This completes the proof of the inductive step.
\end{proof}

\subsection{Proof of Theorem \ref{first main theorem}}

As explained in Section \ref{introduction}, we only have to prove implications {\sf(iv)} and {\sf(v)} in (\ref{implications}). 

Implication {\sf(iv)} is the combined statements of Proposition \ref{fg implies weakly stable} and Proposition \ref{fg implies weight bounded}. Implication {\sf(v)} is immediate from Proposition \ref{multiplicity stable proposition}. This completes the proof of Theorem \ref{first main theorem}.

\begin{remark}
We would like to point out that the noetherian property of $\VI$ is not used in the proofs of implications {\sf(iv)} and {\sf(v)}. (In \cite{CEF}, although the noetherian property of $\FI$ is not used in the proof of \cite[Proposition 3.3.3]{CEF}, it is used in the first paragraph in the proof of \cite[Theorem 1.13]{CEF} to show that \cite[Proposition 3.3.3]{CEF} can be applied.)

One can also deduce condition (RS1) from weak stability and weight boundedness. We give the proof below, which is adapted from the proof of \cite[Proposition 3.3.3]{CEF}. It follows that a proof of Theorem \ref{first main theorem} can be given without using the noetherian property at all. 
\end{remark}

\begin{proposition}
Let $\{V_n, \phi_n\}$ be a consistent sequence with $G_n=\GL_n(\F_q)$. Suppose that $\{V_n, \phi_n\}$ is weakly stable and weight bounded. Then there exists an integer $N$ such that for each $n\geqslant N$, the consistent sequence $\{V_n, \phi_n\}$ satisfies condition (RS1) in Definition \ref{representation stable definition}. 
\end{proposition}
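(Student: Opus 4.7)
The plan is to run the same template as in Proposition \ref{multiplicity stable proposition}: every irreducible constituent of $V_n$ has a nonzero space of $H_{m,r}$-invariants, and the characteristic-zero identification of invariants with coinvariants, combined with the weak stability of $\phi_{m,r}$ on coinvariants, will force $\phi_n$ to be injective on that constituent, hence on all of $V_n$.

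First, let $a$ be the weight of the sequence, choose $s\in\Z_+$ so that $\phi_{m,r}:(V_n)_{H_{m,r}}\to (V_{n+1})_{H_{m,r+1}}$ is an isomorphism whenever $m\leqslant a$ and $r\geqslant s$, and put $N=\max\{a+s,\,2a\}$. Suppose for contradiction that $\ker\phi_n\neq 0$ for some $n\geqslant N$. Since $\ker\phi_n$ is a $G_n$-subrepresentation of $V_n$ and the sequence has weight $a$, I can pick an irreducible $W\subseteq\ker\phi_n$ isomorphic to $\varphi(\la[n])$ for some $\la$ with $m:=\|\la\|\leqslant a$. Set $r=n-m$, so $r\geqslant N-a\geqslant s$. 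Because $n\geqslant 2a\geqslant \|\la\|+\lambda_1$, the first row of $\la[n](\iota)=(n-\|\la\|,\lambda_1,\lambda_2,\ldots)$ has length exactly $r$, and removing this entire first row realizes the relation $\la\sim\la[n]-r$. Lemma \ref{restriction lemma} therefore shows that $W^{H_{m,r}}$ contains a copy of the nonzero representation $\varphi(\la)$, so we may choose a nonzero element $w\in W^{H_{m,r}}$.

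To finish, I would invoke the fact that in characteristic zero the composition $(V_n)^{H_{m,r}}\hookrightarrow V_n\twoheadrightarrow (V_n)_{H_{m,r}}$ is an isomorphism (and similarly for $n+1$); composing it with the isomorphism $\phi_{m,r}$ yields an isomorphism
\[
(V_n)^{H_{m,r}}\xrightarrow{\sim}(V_{n+1})_{H_{m,r+1}}, \qquad v\mapsto[\phi_n(v)].
\]
Applying this map to $w$ and using $w\in W\subseteq\ker\phi_n$ makes the right-hand side zero, forcing $w=0$ and contradicting the choice of $w$. The only real bookkeeping step is ensuring simultaneously that $r\geqslant s$ (so that weak stability applies) and that $\muu=\la$ is a valid way to realize $\muu\sim\la[n]-r$ (so that $w$ exists); both conditions are guaranteed by the choice $N=\max\{a+s,\,2a\}$, and I do not anticipate any deeper obstacle since all the substantive input has already been supplied by Lemma \ref{restriction lemma} and by the proof of Proposition \ref{multiplicity stable proposition}.
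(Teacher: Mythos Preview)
Your proof is correct and follows essentially the same approach as the paper's: show that any irreducible constituent of $\ker\phi_n$ has nonzero $H$-invariants, then use weak stability (via the invariants/coinvariants identification) to get a contradiction. The only difference is cosmetic: the paper fixes $m=a$ once and for all (so $N=a+s$ suffices and one only needs some $\muu\sim\la[n]-r$ to exist, not specifically $\muu=\la$), whereas you let $m=\|\la\|$ vary with the chosen irreducible and carry the extra bound $N\geqslant 2a$, which in fact is not needed since $\la[n]$ being defined already forces the first row of $\la[n](\iota)$ to have length $n-\|\la\|=r$.
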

\begin{proof}
Let $a$ be the weight of the consistent sequence $\{V_n, \phi_n\}$ and choose $s\in\Z_+$ such that the maps $\phi_{a,r}$ are isomorphisms for every $r\geqslant s$. Let $N=a+s$ and suppose that $n\geqslant N$.  Set $r=n-a$. Let $K_n$ be the kernel of $\phi_n : V_n \to V_{n+1}$. Then $(K_n)_{H_{a,r}}$ is contained in the kernel of $\phi_{a,r}$. Since $r\geqslant s$, the map $\phi_{a,r}$ is injective, so $(K_n)_{H_{a,r}}=0$; equivalently, one has $(K_n)^{H_{a,r}}=0$. If $K_n\neq 0$, then it contains an irreducible subrepresentation $\varphi(\la[n])$ for some $\la:\CC\to\PP$. The number of columns in the Young diagram of $\la[n](\iota)$ is $n-\|\la\|$. But $n-\|\la\| \geqslant r$, so there exists $\muu:\CC\to\PP$ such that $\muu\sim\la[n]-r$. It follows by Lemma \ref{restriction lemma} that $\varphi(\la[n])^{H_{a,r}}\neq 0$. Thus $(K_n)^{H_{a,r}}\neq 0$, a contradiction. Therefore we must have $K_n=0$ when $n\geqslant N$.
\end{proof}

\section{Dimension growth} \label{polynomial growth of dimension section}

In this section, we prove Theorem \ref{second main theorem} using Theorem \ref{first main theorem} and the hook-length formula.

\subsection{Hook-length formula}
For each $n\geqslant 1$, let
\begin{equation*}
\Phi_n(q) = \prod_{i=1}^n (q^i-1).
\end{equation*}
Suppose $\lambda=(\lambda_1, \lambda_2, \ldots)$ is a partition. We set
\begin{equation*}
\varepsilon(\lambda) = \sum_{i\geqslant 1} (i-1)\lambda_i.
\end{equation*}
We denote by $h(x)$ the hook-length at the box $x\in\lambda$ of the Young diagram of $\lambda$. Let
\begin{equation*}
\Psi_\lambda(q) = q^{\varepsilon(\lambda)} \cdot \prod_{x\in\lambda} \left( q^{h(x)} - 1 \right)^{-1}.
\end{equation*}

Let us recall the hook-length formula for the dimension of an irreducible representation of the group $G_n$. 

\begin{fact}[{\cite[Proposition 11.10]{Zelevinsky}}] \label{hook length formula}
Let $n\geqslant 1$. Let $\muu : \CC \to \PP$ be a function such that $\|\muu\|=n$. Then 
\begin{equation*}
\dim\left(\varphi(\muu)\right) = \Phi_n(q) \cdot \prod_{\rho\in\CC} \Psi_{\muu(\rho)} \left(q^{\mathrm{d} (\rho)}\right).
\end{equation*}
\end{fact}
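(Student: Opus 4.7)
The plan is to exploit the product structure of the right-hand side over $\CC$: I would reduce, via Fact~\ref{decomposition fact} and Fact~\ref{classification fact}, to the single-cuspidal case, and then invoke a $q$-analog of the hook-length formula. Concretely, suppose the support of $\muu$ is $\{\rho_1, \ldots, \rho_k\}$ with $r_i = |\muu(\rho_i)|$, $d_i = \mathrm{d}(\rho_i)$, and $n_i = r_i d_i$, so that $n = \sum_i n_i$. By Fact~\ref{classification fact},
\begin{equation*}
\varphi(\muu) = s_{\muu(\rho_1)}(\rho_1) \times \cdots \times s_{\muu(\rho_k)}(\rho_k),
\end{equation*}
which is the iterated parabolic induction from $G_{n_1} \times \cdots \times G_{n_k}$ to $G_n$. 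Since parabolic induction from the Levi of a parabolic $P \subset G_n$ multiplies dimensions by $[G_n : P]$, one gets
\begin{equation*}
\dim \varphi(\muu) = [G_n : P] \cdot \prod_{i=1}^k \dim s_{\muu(\rho_i)}(\rho_i).
\end{equation*}

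Next, I would compute the parabolic index. Using $|G_m| = q^{\binom{m}{2}} \Phi_m(q)$, the fact that the unipotent radical has order $q^{\sum_{i<j} n_i n_j}$, and the identity $\binom{n}{2} = \sum_i \binom{n_i}{2} + \sum_{i<j} n_i n_j$, the $q$-power contributions cancel exactly, giving $[G_n : P] = \Phi_n(q) / \prod_i \Phi_{n_i}(q)$. Thus it suffices to prove the single-cuspidal formula
\begin{equation*}
\dim s_\lambda(\rho) = \Phi_{rd}(q) \cdot \Psi_\lambda(q^d) \qquad (\rho \in \CC_d,\; |\lambda| = r),
\end{equation*}
since plugging it back in then yields $\dim \varphi(\muu) = \Phi_n(q) \prod_\rho \Psi_{\muu(\rho)}(q^{\mathrm{d}(\rho)})$ after cancellation of the $\Phi_{n_i}(q)$ factors.

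This single-cuspidal formula is where the real work lies and is the main obstacle. For $\rho = \iota$ (so $d = 1$), the claim specializes to Steinberg's classical $q$-analog of the hook-length formula for the unipotent irreducible $s_\lambda(\iota)$ of $G_r$; this can be established by induction on $|\lambda|$ using Pieri's rule (Fact~\ref{pieri formula}) inside $R(\iota)$, with base case $\dim \iota_r = 1$ from Fact~\ref{trivial rep fact}. The base case $|\lambda| = 1$ of the general formula reduces to $\dim s_{(1)}(\rho) = \dim \rho = (q-1)(q^2-1) \cdots (q^{d-1}-1)$, the well-known dimension of any cuspidal representation of $\GL_d(\F_q)$, which one verifies matches $\Phi_d(q) \cdot (q^d - 1)^{-1}$. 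For $|\lambda| > 1$ and general $\rho$, the Pieri-based induction goes through in $R(\rho)$ with the parameter $q$ in the hook factors replaced by $q^d$; this is essentially the assertion that the isomorphism $\La \to R(\rho)$ of Fact~\ref{schur functions fact} respects a grading in which Schur functions have dimension matching the above formula (a fact that, in Green's original treatment, emerges from explicit character computations on generic semisimple classes in $G_{rd}$). Assembling these ingredients yields the hook-length formula.
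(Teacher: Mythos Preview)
The paper does not give its own proof of this statement: it is recorded as a \emph{Fact} with a citation to \cite[Proposition 11.10]{Zelevinsky}, and is used as a black box in Section~\ref{polynomial growth of dimension section}. So there is no argument in the paper to compare yours against.

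As for your outline: the reduction to the single-cuspidal case is correct and cleanly done. The parabolic index computation $[G_n:P]=\Phi_n(q)/\prod_i\Phi_{n_i}(q)$ is right, and the cancellation with the $\Phi_{n_i}(q)$ factors from the single-cuspidal formula is exactly how the product over $\CC$ arises. The base case $|\lambda|=1$ is also fine: every cuspidal of $G_d$ has dimension $\prod_{i=1}^{d-1}(q^i-1)=\Phi_d(q)\Psi_{(1)}(q^d)$.

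The gap is in your inductive step for the single-cuspidal formula. A ``Pieri-based induction'' does not straightforwardly isolate $\dim s_\lambda(\rho)$: the relation $s_\lambda h_r=\sum_{\mu\sim\lambda+r}s_\mu$ yields, after taking dimensions, one linear relation among many unknowns, and for $\rho\neq\iota$ you also need $\dim h_r(\rho)$ as input, which is precisely the case $\lambda=(r)$ of the formula you are trying to prove. (For $\rho=\iota$ you do have $\dim h_r(\iota)=1$ from Fact~\ref{trivial rep fact}, but even then Pieri alone does not give a clean induction; one typically goes through Jacobi--Trudi or a specialization argument.) You acknowledge this by deferring to Green's character computations, which is honest, but at that point you are essentially citing the result rather than proving it. Zelevinsky's own proof runs through the PSH-algebra structure on $R$ and a specialization homomorphism $\La\to\mathbb{Q}(t)$ sending $s_\lambda\mapsto\Psi_\lambda$; that machinery is what handles all $\rho$ uniformly and is the substantive content behind the citation.
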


\subsection{Proof of Theorem \ref{second main theorem}}

We now prove Theorem \ref{second main theorem}. By extension of scalars, we may assume that $V$ is a finitely generated $\VI$-module over an algebraically closed field of characteristic zero. By Theorem \ref{first main theorem}, it suffices to prove the following proposition.

\begin{proposition}
Let $m\in\Z_+$. Let $\la:\CC\to\PP$ be a function such that $\|\la\|=m$. Then there exists $N\in \Z_+$ and a polynomial $P\in \mathbb{Q}[T]$ such that
\begin{equation*}
\dim (\varphi(\la[n])) = P(q^n) \quad \mbox{ for all } n\geqslant N.
\end{equation*}
\end{proposition}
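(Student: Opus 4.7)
The plan is to apply the hook-length formula of Fact~\ref{hook length formula} and track which factors depend on $n$ once $\la$ is fixed. Write $\la(\iota) = (\lambda_1, \lambda_2, \ldots)$ and $\mu := \la[n](\iota) = (n-m, \lambda_1, \lambda_2, \ldots)$, and let $\lambda'$ denote the conjugate partition of $\la(\iota)$. The factors $\Psi_{\la(\rho)}(q^{\mathrm{d}(\rho)})$ for $\rho \neq \iota$ are constants in $n$, and $\varepsilon(\mu) = \sum_{j \geq 1} j \lambda_j$ depends only on $\la$. Moreover, for any box $(i,j)$ of $\mu$ with $i \geq 2$, one has $\mu_i = \lambda_{i-1}$ and $\mu'_j = 1 + \lambda'_j$, so the hook length $h(i,j)$ is also independent of $n$. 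Thus, up to a constant factor in $n$, one needs to analyse
\begin{equation*}
\Phi_n(q) \cdot \prod_{j=1}^{n-m}(q^{h(1,j)} - 1)^{-1}.
\end{equation*}

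The first-row boxes split into two ranges. For $\lambda_1 < j \leq n-m$, one has $\mu'_j = 1$ and $h(1,j) = n - m - j + 1$; these contribute $\prod_{k=1}^{n-m-\lambda_1}(q^k - 1)^{-1}$, which cancels all but the top $m + \lambda_1$ factors of $\Phi_n(q)$. Setting $t := q^n$ and using $q^{n-i} - 1 = q^{-i}(t - q^i)$, the surviving numerator becomes, up to a monomial in $q$, the polynomial $\prod_{i=0}^{m+\lambda_1-1}(t - q^i)$. For $1 \leq j \leq \lambda_1$, one has $\mu'_j = 1 + \lambda'_j$ and $h(1,j) = n - \gamma_j$ where $\gamma_j := m + j - 1 - \lambda'_j$; these yield a denominator $\prod_{j=1}^{\lambda_1}(t - q^{\gamma_j})$, again up to a monomial in $q$.

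It then remains to show that the $\lambda_1$ exponents $\gamma_j$ are distinct elements of $\{0, 1, \ldots, m + \lambda_1 - 1\}$, so that the denominator cancels with $\lambda_1$ of the numerator factors, leaving a polynomial in $t = q^n$ of degree $m$. Distinctness follows from $\gamma_{j+1} - \gamma_j = 1 + (\lambda'_j - \lambda'_{j+1}) \geq 1$ (since $\lambda'$ is weakly decreasing). The bounds $1 \leq \lambda'_j \leq \lambda'_1 \leq |\la(\iota)| \leq m$ for $1 \leq j \leq \lambda_1$ give $j - 1 \leq \gamma_j \leq m + j - 2 \leq m + \lambda_1 - 2$, which sits inside the required range. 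Collecting the accumulated constant prefactor and the accumulated monomial in $q$ yields a polynomial $P \in \mathbb{Q}[T]$ with $\dim(\varphi(\la[n])) = P(q^n)$ for all $n \geq N := m + \lambda_1$ (the threshold at which $\la[n]$ becomes defined).

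The main obstacle is the bookkeeping: accurately tracking the powers of $q$ produced when each factor of the hook-length formula is rewritten in terms of $t = q^n$. The essential combinatorial step is the last one, verifying that $\{\gamma_j : 1 \leq j \leq \lambda_1\} \subseteq \{0, 1, \ldots, m + \lambda_1 - 1\}$; this inclusion is what forces the overall expression to be a polynomial in $q^n$ rather than merely a rational function, and reflects the positivity and integrality of $\dim(\varphi(\la[n]))$.
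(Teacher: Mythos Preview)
Your proof is correct and follows essentially the same approach as the paper: both apply the hook-length formula, note that only $\Phi_n(q)$ and the first-row hooks of $\la[n](\iota)$ carry $n$-dependence, and cancel all but $m$ of the factors $(q^{n-s}-1)$. The only difference is cosmetic: you compute the first-row hook exponents explicitly as $\gamma_j = m+j-1-\lambda'_j$ and verify they lie in $\{0,\ldots,m+\lambda_1-1\}$, whereas the paper merely asserts the existence of integers $r_1<\cdots<r_{\lambda_1}<N$ with this property.
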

\begin{proof}
In the formula for $\dim (\varphi(\la[n]))$ given by Fact \ref{hook length formula}, the only factors which depend on $n$ are $\Phi_n(q)$ and $\Psi_{\la[n](\iota)}(q)$.

Write $\la(\iota)$ as $(\lambda_1, \lambda_2, \ldots)$, and let $N=m + \lambda_1$. Suppose $n \geqslant N$. It is clear that $\varepsilon(\la[n](\iota))$ is an integer independent of $n$. Moreover, the number of boxes in the first row of the Young diagram of $\la[n](\iota)$ is $n-m$; the hook-lengths at these boxes are:
\begin{equation*}
n-r_1,\; \ldots,\; n-r_{\lambda_1},\; n-N,\; \ldots,\; 2,\; 1.
\end{equation*}
for some $r_1 < \cdots < r_{\lambda_1} < N$. The integers $r_1, \ldots, r_{\lambda_1}$ do not depend on $n$. Let $s_1<\cdots<s_m$ be the $m$ integers such that
\begin{equation*}
\{r_1,\ldots, r_{\lambda_1}\} \sqcup \{s_1,\ldots, s_m\} = \{0, 1, \ldots, N-1\}.
\end{equation*}
It follows from Fact \ref{hook length formula} that one has
\begin{equation*} 
\dim (\varphi(\la[n])) = c (q^{n-s_1}-1) \cdots (q^{n-s_m}-1)
\end{equation*}
for some $c\in\mathbb{Q}$ which does not depend on $n$. Choosing $P\in \mathbb{Q}[T]$ to be the polynomial 
\begin{equation*}
P(T) = c (q^{-s_1} T - 1) \cdots (q^{-s_m} T - 1)
\end{equation*}
of degree $m$, we are done.
\end{proof}

\begin{remark}
From the above proof, we see that the degree of the polynomial $P$ in Theorem \ref{second main theorem} is at most the weight of $V$.
\end{remark}

\end{document}